\newtheorem{priteo}{Theorem}[section]
\newtheorem{lema}{Lemma}[section]
\newtheorem{nota}{Remark}[section]
\newtheorem{defi}{Definition}[section]
\newtheorem{prop}{Proposition}[section]
\title{Heteroclinic Cycles in Systems with $\mathbb{Z}_2\times\mathbb{Z}_2\times\mathbb{Z}_2$ Symmetry, Revisited}
\author{\footnotesize Adrian C. Murza\footnote{Institute of Mathematics "Simion Stoilow" of the Romanian Academy, P.O. Box $1$--$764,$ RO--$014700,$ Bucharest, Romania}}
\address{Institute of Mathematics "Simion Stoilow" of the Romanian Academy\\
Bucharest, $014700,$ Romania\\
\email{adrian\_murza@hotmail.com}}
\begin{document}

\maketitle

\begin{abstract}
We analyze the generating mechanisms for heteroclinic cycles in $\mathbb{Z}_2\times\mathbb{Z}_2\times\mathbb{Z}_2$--equivariant ODEs, not involving Hopf bifurcations. Such cycles have been observed in particle physics systems with the mentioned symmetry, in absence of the Hopf bifurcation, see \cite{bury} and \cite{Park}, and as far as we know, there is no available theoretical data explaining these phenomena. We use singularity theory to study the equivalence in the group-symmetric context, as well as the recognition problem for the simplest bifurcation problems with this symmetry group. Singularity results highlight different mechanisms for the appearance of heteroclinic cycles, based on the transition between the bifurcating branches. On the other hand, we analyze the heteroclinic cycle of a generic dynamical system with the symmetry of the group $\mathbb{Z}_2\times\mathbb{Z}_2\times\mathbb{Z}_2$ acting on a eight--dimensional torus $\mathbb{T}^8,$ constructed via a Cayley graph, under weak coupling. We identify the conditions for heteroclinic cycle between four equilibria in the three--dimensional fixed point subspaces of some of the isotropy subgroups of $\mathbb{Z}_2\times\mathbb{Z}_2\times\mathbb{Z}_2\times\mathbb{S}^1.$ We also analyze the stability of the heteroclinic cycle.
\end{abstract}

\keywords{equivariant dynamical system; Cayley graph; Hopf bifurcation; heteroclinic cycle.}
\ccode{2000 Mathematics Subject Classification: 37C80, 37G40, 34C15, 34D06, 34C15}

\section{Introduction}
The interest on the behavior offered by $\mathbb{Z}_2\times\mathbb{Z}_2\times\mathbb{Z}_2$--equivariant ODEs increased considerably over the last decade, due to its possible application in particle physics, more specifically to the neutrino eigenmass determination problem. It has been shown that the eigenmass matrix respects the $\mathbb{Z}_2\times\mathbb{Z}_2\times\mathbb{Z}_2$ symmetry \cite{lashin}-\cite{lashin3}. Moreover, heteroclinic cycles have been described in such systems, see \cite{bury} or \cite{Park}, in absence of Hopf bifurcation. We will show that the bifurcation analysis from the singularity and group theoretical points of view as well as the weak--coupling case can offer valuable insight on the mechanisms leading to heteroclinic cycles in $\mathbb{Z}_2\times\mathbb{Z}_2\times\mathbb{Z}_2$--equivariant systems, in absence of the Hopf bifurcation.

So far, to our best knowledge, the main reference on the heteroclinic cycles appearing in $\mathbb{Z}_2\times\mathbb{Z}_2\times\mathbb{Z}_2$--equivariant systems is Melbourne's work \cite{Melbourne}.
In this article the author analyzes the interaction of three Hopf modes to show that locally a bifurcation
gives rise to heteroclinic cycles between three periodic solutions. More specifically, he considers a vector $f$
field with an equilibrium and assumes that the Jacobian matrix of $f$ about this equilibrium has three distinct complex conjugate pairs of eigenvalues on the imaginary axis. He obtains three branches of periodic solutions arising at the Hopf point from the steady--state equilibrium, as the parameters are varied. He uses Birkhoff normal form, to approximate $f$ close to the bifurcation point by a vector field commuting with the symmetry group of the three-torus.

In this paper we do not assume the existence of three Hopf modes to study the heteroclinic cycles in $\mathbb{Z}_2\times\mathbb{Z}_2\times\mathbb{Z}_2$--equivariant systems. Instead, we perform a detailed analysis of the bifurcation problem with $\mathbb{Z}_2\times\mathbb{Z}_2\times\mathbb{Z}_2$ symmetry, with results from singularity theory. More specifically, after analyze the action of the group $\mathbb{Z}_2\times\mathbb{Z}_2\times\mathbb{Z}_2$ on $\mathbf{R}^3$, we study the restrictions on bifurcation problems $g$ commuting with $\mathbb{Z}_2\times\mathbb{Z}_2\times\mathbb{Z}_2$ symmetry; we analyze the equivalence and
the recognition problem for the simplest bifurcation problems with $\mathbb{Z}_2\times\mathbb{Z}_2\times\mathbb{Z}_2$--symmetry. This allows us to identify two new possible mechanisms for obtaining heteroclinic cycles in these systems. They are based on the smooth transition and jumping between the bifurcating solution branches, respectively. Moreover, we carry out the the linearization of the normal form with $\mathbb{Z}_2\times\mathbb{Z}_2\times\mathbb{Z}_2$--symmetry. This allow us to obtain the explicit form of all possible eigenvalues for this problem. Up to this point, we owe our results to the application of the analysis methods developed in \cite{GS85} and \cite{GS88}.

In the second part of our work, we analyze a third possible mechanism leading to heteroclinic cycles in $\mathbb{Z}_2\times\mathbb{Z}_2\times\mathbb{Z}_2$--equivariant systems: the low coupling case with no Hopf bifurcation. We use the method designing oscillatory networks with the symmetry of a specific group developed by Ashwin and Stork \cite{Stork}, based on the Cayley graph of our group. By considering the weak--coupling case, we reduce the asymptotic dynamics to a flow on an eight--dimensional torus $\mathbb{T}^{8};$ this allows us to average the whole network and introduce an extra $\mathbb{S}^1$ symmetry. As a consequence, we are enabled to identify the existence and classify the stability of heteroclinic cycles in some of the three--dimensional subspaces which are invariant under the group action.

By providing three additional mechanisms capable of generating heteroclinic cycles in $\mathbb{Z}_2\times\mathbb{Z}_2\times\mathbb{Z}_2$--equivariant systems, we believe our paper affords significant additional insight to the original knowledge of these phenomena due to Melbourne \cite{Melbourne}.

This paper is organized as follows. In Section \ref{sec1} we discuss the bifurcation problems with $\mathbb{Z}_2\times\mathbb{Z}_2\times\mathbb{Z}_2$ symmetry from three angles: the group action on $\mathbf{R}^3,$ the restrictions on bifurcation problems $g$ commuting with  this group, and define the solution types. In Section \ref{sec2} we use singularity theory: to study the equivalence in the $\mathbb{Z}_2\times\mathbb{Z}_2\times\mathbb{Z}_2$--symmetric context; to analyze the recognition problem for the simplest bifurcation problems with this symmetry group and to analyze the linearized stability of the normal form. This section involves multiple but straightforward computations, especially in the proof of Theorem \ref{teo42}. We give all the computation outlines, but, because of the dimensions of the involved matrices, we preferred to analyze the outcome rather than filling many pages with unnecessary rows/columns. We conclude this section by identifying two important mechanisms for generating heteroclinic cycles with no need of invoking Hopf bifurcation. Section \ref{sec4} can be viewed as a preparatory section for the weak--coupling case. We show how to construct a generic eight--dimensional $\mathbb{Z}_2\times\mathbb{Z}_2\times\mathbb{Z}_2$--equivariant system, based on the Cayley graph of our group. Section \ref{sec5} is entirely devoted to the weak--coupling (non--Hopf bifurcation involving), based mechanism to produce heteroclinic cycles in $\mathbb{Z}_2\times\mathbb{Z}_2\times\mathbb{Z}_2$--equivariant systems. We show how we can embed the dynamics on an eight--dimensional torus. We assume hyperbolicity of the individual limit cycles for small enough values of the coupling parameter. Moreover, we identify the three--dimensional subspaces invariant under the action of certain isotropy subgroups of $\mathbb{Z}_2\times\mathbb{Z}_2\times\mathbb{Z}_2,$ and prove the existence of heteroclinic cycles within these spaces. We also analyze their stability.

\section{Bifurcation problems with $\mathbb{Z}_2\times\mathbb{Z}_2\times\mathbb{Z}_2$ symmetry}\label{sec1}
In this section we discuss the following points:
\begin{itemize}
\item[(a)] The action of the group $\mathbb{Z}_2\times\mathbb{Z}_2\times\mathbb{Z}_2$ on $\mathbf{R}^3;$
\item[(b)] Restrictions on bifurcation problems $g$ commuting with $\mathbb{Z}_2\times\mathbb{Z}_2\times\mathbb{Z}_2;$
\item[(c)] Solution types of the equation $g=0.$
\end{itemize}
\subsection{Preliminary notations}\label{sectionprelim}
The application of Singularity Theory to our bifurcation analysis requires the use of many concepts developed in \cite{GS85}. To facilitate the reading of this paper, we give here a brief description of them adapted to our case; for more details, the reader is invited to visit the mentioned reference which constitutes the main guidance for this section of the paper.
Let $\mathbf{x}=(x,y,z)\in\mathbf{R}^3.$ For the clarity of the explanations, we will use the explicit expression of the above equation only when it is required by the situation. By
$$\mathscr{E}_{\mathbf{x},\lambda}$$
we denote the space of all functions in three state parameters and one bifurcation parameter $(\lambda),$ that are defined and $C^{\infty}$ on some neighborhood of the origin. A germ is an equivalence class in $\mathscr{E}_{\mathbf{x},\lambda}.$
We denote by
$$\mathscr{E}_{\mathbf{x},\lambda}(\Gamma)$$
the ring of $\Gamma$--equivariant germs. That is, if $f$ is a germ, then
$$f(\gamma\cdot\mathbf{x})=\gamma\cdot f(\mathbf{x}),\forall \mathbf{x}\in\mathbf{R}^3,\forall\gamma\in\Gamma.$$
The module $\overrightarrow{\mathscr{E}}(\Gamma)$ over the ring $\mathscr{E}(\Gamma)$ is defined as in the following Theorem.
\begin{priteo}[Po\'enaru, 1976, \cite{Po}]\label{poenaru}
Let $\Gamma$ be a compact Lie group and let $g_1,\ldots,g_r$ generate the module $\overrightarrow{\mathscr{P}}(\Gamma)$ of $\Gamma$--equivariant polynomials over the ring $\mathscr{P}(\Gamma).$ Then $g_1,\ldots,g_r$ generate the module $\overrightarrow{\mathscr{E}}(\Gamma)$ over the ring $\mathscr{E}(\Gamma).$
\end{priteo}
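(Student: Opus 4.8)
The plan is to reduce this equivariant (module) statement to the corresponding statement for \emph{invariant functions} --- namely G.\ Schwarz's theorem that a generating set for the ring of polynomial invariants also generates the ring of $C^\infty$ invariant germs --- via the standard device of pairing an equivariant map against a second copy of the representation. Since $\Gamma$ is compact, averaging an inner product lets us assume it acts orthogonally on $V=\mathbf{R}^3$; the bifurcation parameter $\lambda$, on which $\Gamma$ acts trivially, may be carried along as an inert extra coordinate throughout, so I suppress it below. To an equivariant germ $f\in\overrightarrow{\mathscr{E}}(\Gamma)$ I associate the germ $\widehat f$ on $V\oplus V$ given by $\widehat f(x,y)=\langle y,f(x)\rangle$. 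Because the action is orthogonal and $f(\gamma x)=\gamma f(x)$, the germ $\widehat f$ is invariant under the diagonal $\Gamma$--action on $V\oplus V$ and homogeneous of degree one in $y$; conversely every invariant germ on $V\oplus V$ that is linear in $y$ equals $\widehat f$ for a unique equivariant $f$, so $f\mapsto\widehat f$ is a bijection onto the degree--one--in--$y$ part of $\mathscr{E}(\Gamma,V\oplus V)$.

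Next I would describe the polynomial invariants of the diagonal action. Grading $\mathscr{P}(\Gamma,V\oplus V)$ by the degree $d$ in $y$, the piece $d=0$ is $\mathscr{P}(\Gamma,V)$, while the piece $d=1$ is precisely $\{\langle y,g(x)\rangle:g\in\overrightarrow{\mathscr{P}}(\Gamma,V)\}$, which as a $\mathscr{P}(\Gamma,V)$--module is isomorphic to $\overrightarrow{\mathscr{P}}(\Gamma,V)$ and hence, \emph{by hypothesis}, is generated by $\langle y,g_1(x)\rangle,\dots,\langle y,g_r(x)\rangle$. By Hilbert's finiteness theorem each graded piece is a finitely generated $\mathscr{P}(\Gamma,V)$--module, so one may choose a finite set of generators of $\mathscr{P}(\Gamma,V\oplus V)$, homogeneous in $y$, of the form $\sigma_1(x),\dots,\sigma_p(x)$ (the $d=0$ generators), $\langle y,g_1(x)\rangle,\dots,\langle y,g_r(x)\rangle$ (the $d=1$ generators), and further generators $\rho_1(x,y),\dots,\rho_q(x,y)$ of degree $d\ge 2$ in $y$, normalized so that $\rho_k(x,0)=0$.

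Now I would invoke Schwarz's theorem for the compact group $\Gamma$ acting on $V\oplus V$: every $C^\infty$ $\Gamma$--invariant germ on $V\oplus V$ is a $C^\infty$ function of the chosen polynomial generators. Thus there is a smooth germ $\Phi$ with
\[
\widehat f(x,y)=\Phi\bigl(\sigma(x),\,\langle y,g_1(x)\rangle,\dots,\langle y,g_r(x)\rangle,\,\rho(x,y)\bigr).
\]
Differentiating in $y$ at $y=0$ and using that $\widehat f$ is linear in $y$, that $\sigma$ is $y$--free, that $\langle y,g_j\rangle$ has $y$--derivative $g_j(x)$, and that each $\rho_k$ vanishes to order at least $2$ in $y$, I obtain
\[
f(x)=\sum_{j=1}^{r} h_j(x)\,g_j(x),\qquad h_j(x)=\frac{\partial\Phi}{\partial t_j}\bigl(\sigma(x),0,0\bigr),
\]
where $t_j$ denotes the argument slot of $\langle y,g_j\rangle$. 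Each $h_j$ is a $C^\infty$ function of $\sigma(x)$ alone, hence lies in $\mathscr{E}(\Gamma)$, which shows that $g_1,\dots,g_r$ generate $\overrightarrow{\mathscr{E}}(\Gamma)$ over $\mathscr{E}(\Gamma)$; reinstating $\lambda$ changes nothing since $\Gamma$ acts trivially on it.

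I expect the main obstacle to be twofold. First, the genuinely deep input is Schwarz's smooth--function theorem for compact group invariants (proved through the Malgrange preparation theorem together with a Whitney--type extension argument), which one uses here as a black box on the \emph{doubled} representation $V\oplus V$ rather than on $V$. Second, some care is needed to justify that the $y$--degree--one part of a polynomial generating set of $\mathscr{P}(\Gamma,V\oplus V)$ may be taken to be exactly $\{\langle y,g_j(x)\rangle\}$: this rests on the compatibility of the $y$--grading with the diagonal action and on the Noetherian property of $\mathscr{P}(\Gamma,V)$. Once these are in place, extracting the linear--in--$y$ term is the purely formal step that converts the invariant--function statement into the desired module statement.
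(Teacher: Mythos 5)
The paper does not prove this statement at all: Theorem \ref{poenaru} is imported verbatim from Po\'enaru's lecture notes \cite{Po} and is used as a black box, so there is no internal proof to compare yours against. On its own merits, your argument is correct and is in fact the classical route to the equivariant statement: average the inner product so $\Gamma$ acts orthogonally, pass to the doubled representation $V\oplus V$, identify equivariant germs $f$ with invariant germs $\widehat f(x,y)=\langle y,f(x)\rangle$ linear in $y$, apply Schwarz's theorem on $V\oplus V$ (with $\lambda$ carried along as a trivial factor), and extract the linear--in--$y$ term to get $f=\sum_j h_j g_j$ with $h_j$ smooth invariant germs. The two delicate points are exactly the ones you flag: (i) the pairing trick needs orthogonality, supplied by compactness; and (ii) one must choose a Hilbert basis of $\mathscr{P}(\Gamma,V\oplus V)$ homogeneous with respect to the $y$--grading (legitimate because the scaling $y\mapsto ty$ commutes with the diagonal action, so the invariant ring is graded), note that its degree--zero members can be augmented to a generating set of $\mathscr{P}(\Gamma,V)$ and its degree--one members are $\mathscr{P}(\Gamma,V)$--combinations of the $\langle y,g_j\rangle$ by hypothesis, so the basis may be replaced by $\{\sigma_i\}\cup\{\langle y,g_j\rangle\}\cup\{\rho_k\}$ with $\rho_k$ of $y$--degree at least two. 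With those points made explicit, the differentiation at $y=0$ and the conclusion that $h_j(x)=\frac{\partial\Phi}{\partial t_j}(\sigma(x),0,0)$ lies in $\mathscr{E}(\Gamma)$ are routine, so your proposal is a complete proof modulo the genuinely deep ingredient, Schwarz's smooth invariant theorem, which is also the input Po\'enaru relies on.
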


Moreover, as in \cite{GS85}, we have the following definition for $\overleftrightarrow{\mathscr{E}}_{\mathbf{x},\lambda}(\Gamma)$:
$$\overleftrightarrow{\mathscr{E}}_{\mathbf{x},\lambda}(\Gamma)=\left\{3\times 3~\mathrm{matrix~germs}~S(\mathbf{x},\lambda):S(\gamma\cdot\mathbf{x},\lambda)=\gamma\cdot S(\mathbf{x},\lambda)\right\}.$$
We define
$$\overrightarrow{\mathscr{M}}_{\mathbf{x},\lambda}(\Gamma)=\left\{g\in\overrightarrow{\mathscr{E}}_{\mathbf{x},\lambda}(\Gamma):
g(\mathbf{0},0)=0\right\};$$
that is, $\overrightarrow{\mathscr{M}}_{\mathbf{x},\lambda}(\Gamma)$ consists of $\Gamma$--equivariant mappings that vanish at the origin.
Finally, we need to define the $\Gamma$--equivariant restricted tangent space $RT(h,\Gamma)$ of a $\Gamma$--equivariant bifurcation problem $h\in\overrightarrow{\mathscr{E}}(\Gamma).$ In order to do this, we have to give first the following definition.
\begin{defi}\label{dfinsing1}
Let $g,h:\mathbf{R}^3\times\mathbf{R}\rightarrow\mathbf{R}^3,~g,h\in\overrightarrow{\mathscr{E}}_{\mathbf{x},\lambda}(\Gamma)$ be a bifurcation problem with three state variables. Then $g$ and $h$ are equivalent if there exists an invertible change of coordinates $(\mathbf{x},\lambda)\mapsto(Z(\mathbf{x},\lambda),\Lambda(\lambda))$  and $S$ is a $3\times 3$ invertible matrix depending smoothly on $\mathbf{x},$ such that
\begin{equation}\label{eq1def1}
g(\mathbf{x},\lambda)=S(\mathbf{x},\lambda)h(Z(\mathbf{x},\lambda),\Lambda(\lambda))
\end{equation}
where the mapping $\Phi(\mathbf{x},\lambda)=(Z(\mathbf{x},\lambda),\Lambda(\lambda))$ is preserving the orientation in $\lambda;$ in particular,
\begin{equation}\label{eq2def1}
\begin{array}{l}
\hspace{1.5cm}Z(\mathbf{0})=0,~~\Lambda(0)=0,~~\mathrm{det}(dZ)_{(\mathbf{0})}\neq0,~~\Lambda'(0)>0~~\mathrm{and}\\
\\
Z(\gamma\cdot\mathbf{x},\lambda)=\gamma\cdot Z(\mathbf{x},\lambda),~~
S(\gamma\cdot\mathbf{x},\lambda)=\gamma\cdot S(\mathbf{x},\lambda),~~
S(\mathbf{0},0),(dZ)_{\mathbf{0},0}\in\mathscr{L}_{\Gamma}(V)^0.
\end{array}
\end{equation}
We call $g$ and $h$ strongly $\Gamma$--equivalent if $\Lambda(\lambda)\equiv\lambda.$
\end{defi}
We define the $\Gamma$--equivariant restricted tangent space of $g$ to be
$$RT(g,\Gamma)=\left\{S\cdot g+(dg)Z,~Z(\mathbf{0},0)=0\right\},$$
while $S(\mathbf{x},\lambda)$ and $Z$ satisfy \eqref{eq2def1}.
In all of these cases, of course $\Gamma=\mathbb{Z}_2\times\mathbb{Z}_2\times\mathbb{Z}_2.$
\begin{lemma}[Nakayama's Lemma, \cite{GS85}]\label{lemanaka}
Let $\mathscr{I}$ and $\mathscr{J}$ be ideals in $\mathscr{E}_n,$ and assume that $\mathscr{I}=\langle p_1,\ldots,p_l\rangle$ is finitely generated. Then $\mathscr{I}\subset\mathscr{J}$ if and only if $\mathscr{I}\subset\mathscr{J}+\mathscr{M}\cdot\subset\mathscr{I}.$
\end{lemma}
\subsection{The action of $\mathbb{Z}_2\times\mathbb{Z}_2\times\mathbb{Z}_2$ on $\mathbf{R}^3$}
The group $\mathbb{Z}_2\times\mathbb{Z}_2\times\mathbb{Z}_2$ has eight elements $(\kappa,\zeta,\xi)$ where $\kappa=\pm1,$ $\zeta=\pm1,$ $\xi=\pm1.$ The group element $(\kappa,\zeta,\xi)$ acts on the point
$(x,y,z)\in\mathbf{R}^3$ by
$$(\kappa,\zeta,\xi)\cdot(x,y,z)=(\kappa x,\zeta y,\xi z).$$
We may think of the action of $(\kappa,\zeta,\xi)$ on $\mathbf{R}^3$ as a linear mapping; the matrix associated to the action $(\kappa,\zeta,\xi)$ is the diagonal matrix
\begin{equation}
\begin{array}{l}
\begin{bmatrix}
\kappa&0&0\\
0&\zeta&0\\
0&0&\xi
\end{bmatrix}.
\end{array}
\end{equation}
The behavior of the action of $\mathbb{Z}_2\times\mathbb{Z}_2\times\mathbb{Z}_2$ on $\mathbf{R}^3$ is different at different points in $\mathbf{R}^3.$ We describe these differences in two ways: through orbits and through isotropy subgroups.\\
The orbit of a point $(x,y,z)$ under the action of $\mathbb{Z}_2\times\mathbb{Z}_2\times\mathbb{Z}_2$ is the set of points
$$\left\{(\kappa,\zeta,\xi)\cdot(x,y,z):(\kappa,\zeta,\xi)\in\mathbb{Z}_2\times\mathbb{Z}_2\times\mathbb{Z}_2\right\}.$$
There are eight orbit types:
\begin{equation}\label{orbits}
\begin{array}{l}
\hspace{-7.4cm}(a)~\mathrm{The}~\mathrm{origin},~(0,0,0),\\
\hspace{-7.4cm}(b)~\mathrm{Points}~\mathrm{on}~\mathrm{the}~x-\mathrm{axis},~~(\pm x,0,0)~\mathrm{with}~x\neq0,\\
\hspace{-7.4cm}(c)~\mathrm{Points}~\mathrm{on}~\mathrm{the}~y-\mathrm{axis},~~( 0,\pm y,0)~\mathrm{with}~y\neq0,\\
\hspace{-7.4cm}(d)~\mathrm{Points}~\mathrm{on}~\mathrm{the}~z-\mathrm{axis},~~(0,0,\pm z)~\mathrm{with}~z\neq0,\\
\hspace{-7.4cm}(e)~\mathrm{Points}~\mathrm{on}~\mathrm{the}~\mathrm{plane}~x=0,~~(0,\pm y,\pm z)~\mathrm{with}~y\neq0,~z\neq0\\
\hspace{-7.4cm}(f)~\mathrm{Points}~\mathrm{on}~\mathrm{the}~\mathrm{plane}~y=0,~~(\pm x,0,\pm z)~\mathrm{with}~x\neq0,~z\neq0\\
\hspace{-7.4cm}(g)~\mathrm{Points}~\mathrm{on}~\mathrm{the}~\mathrm{plane}~z=0,~~(\pm x,\pm y,0)~\mathrm{with}~x\neq0,~y\neq0\\
\hspace{-7.4cm}(h)~\mathrm{Points}~\mathrm{off}~\mathrm{the}~\mathrm{axes},~(\pm x,\pm y,\pm z)~\mathrm{with}~x\neq0,~y\neq0,~z\neq0.
\end{array}
\end{equation}
We see that orbits have either $1,~2,~4$ or $8$ points, the origin being the unique one-orbit point.
The isotropy subgroup of a point $(x,y,z)$ is the set of symmetries preserving that point. In symbols, the isotropy subgroup of the point $(x,y,z)$ is
$$\left\{(\kappa,\zeta,\xi)\in\mathbb{Z}_2\times\mathbb{Z}_2\times\mathbb{Z}_2:(\kappa,\zeta,\xi)\cdot(x,y,z)=(x,y,z)\right\}.$$
It is easy to see that there are eight isotropy subgroups:
\begin{itemize}
\vspace{0.25cm}
\item[(a)] $\mathbb{Z}_2\times\mathbb{Z}_2\times\mathbb{Z}_2~\mathrm{corresponding}~\mathrm{to}~\mathrm{the}~\mathrm{origin},$
\item[(b)] $\mathbb{Z}_2\times\mathbb{Z}_2=\left\{1,\zeta,\xi\right\}~\mathrm{corresponding}~\mathrm{to}~(x,0,0)~\mathrm{with}~x\neq0,$
\item[(c)] $\mathbb{Z}_2\times\mathbb{Z}_2=\left\{\kappa,1,\xi\right\}~\mathrm{corresponding}~\mathrm{to}~(0,y,0)~\mathrm{with}~y\neq0,$
\item[(d)] $\mathbb{Z}_2\times\mathbb{Z}_2=\left\{\kappa,\zeta,1\right\}~\mathrm{corresponding}~\mathrm{to}~(0,0,z)~\mathrm{with}~x\neq0,$
\item[(e)] $\mathbb{Z}_2=\left\{\kappa,1,1\right\}~\mathrm{corresponding}~\mathrm{to}~(0,y,z)~\mathrm{with}~y\neq0,~z\neq0,$
\item[(f)] $\mathbb{Z}_2=\left\{1,\zeta,1\right\}~\mathrm{corresponding}~\mathrm{to}~(x,0,z)~\mathrm{with}~x\neq0,~z\neq0,$
\item[(g)] $\mathbb{Z}_2=\left\{1,1,\xi\right\}~\mathrm{corresponding}~\mathrm{to}~(x,y,0)~\mathrm{with}~x\neq0,~y\neq0,$
\item[(h)] $\mathbbm{1}=\left\{1,1,1\right\}~\mathrm{corresponding}~\mathrm{to}~(x,y,z)~\mathrm{with}~x\neq0,~y\neq0,~z\neq0.$
\end{itemize}

\subsection{The form of $\mathbb{Z}_2\times\mathbb{Z}_2\times\mathbb{Z}_2$--symmetric bifurcation problems}
Let $g:\mathbf{R}^3\times\mathbf{R}\rightarrow\mathbf{R}^3$ be a bifurcation problem with three state variables; that is, let $g$ be $C^{\infty}$ and satisfy
$$g(0,0,0,0)=0,~~(dg)_{(0,0,0,0)}=0.$$
We say that the bifurcation problem $g$ commutes with $\mathbb{Z}_2\times\mathbb{Z}_2\times\mathbb{Z}_2$ if
\begin{equation}\label{commut}
g((\kappa,\zeta,\xi)\cdot(x,y,z),\lambda)=(\kappa,\zeta,\xi)\cdot g(x,y,z,\lambda).
\end{equation}
We will need the following result for the next lemma.
\begin{lema}\label{lema normal form1}
If $f\in\mathscr{E}_{\mathbf{x},\lambda}$ is even in $\mathbf{x},$ then $f$ may be expressed as a smooth function of $\mathbf{x}^2$ and $\lambda;$ in symbols,
$$f(\mathbf{x},\lambda)=a(\mathbf{x}^2,\lambda).$$
\end{lema}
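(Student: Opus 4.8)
We first fix the meaning of the statement: I read ``$f$ even in $\mathbf{x}$'' as invariance under the full sign--change action of $\Gamma=\mathbb{Z}_2\times\mathbb{Z}_2\times\mathbb{Z}_2$, that is $f(\kappa x,\zeta y,\xi z,\lambda)=f(x,y,z,\lambda)$ for all $\kappa,\zeta,\xi=\pm1$, with $\mathbf{x}^2=(x^2,y^2,z^2)$; thus the claim is the $C^{\infty}$ counterpart of the elementary fact that the ring of $\Gamma$--invariant polynomials on $\mathbf{R}^3$ is $\mathbf{R}[x^2,y^2,z^2]$. The plan is to prove it one state variable at a time, treating the as--yet--untreated state variables together with $\lambda$ as inert parameters, so that the whole assertion reduces to a single one--variable statement applied three times in succession.

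The one--variable statement is Whitney's lemma on even functions, in parametrized form: if $\phi(w,p)$ is $C^{\infty}$ near the origin of $\mathbf{R}\times\mathbf{R}^{k}$ and $\phi(-w,p)=\phi(w,p)$, then $\phi(w,p)=\psi(w^2,p)$ for some $C^{\infty}$ germ $\psi$ (cf.\ \cite{GS85}). I would prove it by fixing $n$ and writing Taylor's formula with remainder,
\[
\phi(w,p)=\sum_{k=0}^{n}a_{2k}(p)\,w^{2k}+w^{2n+2}\sigma_n(w,p),
\]
where only even powers occur because $\phi$ is even, the $a_{2k}$ are $C^{\infty}$, and the even remainder --- which vanishes to order $2n+2$ at $w=0$ --- has a factor $w^{2n+2}$ split off with $\sigma_n$ still $C^{\infty}$, by Hadamard's lemma iterated $2n+2$ times. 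Substituting $w=\sqrt{t}$ turns the polynomial part into the honest polynomial $\sum_{k=0}^{n}a_{2k}(p)t^{k}$ in $(t,p)$ and the remainder into $t^{\,n+1}\sigma_n(\sqrt{t},p)$, which a short estimate shows is of class $C^{n}$ up to $t=0$ with all its derivatives of order $\le n$ vanishing there. Hence $\psi(t,p):=\phi(\sqrt{t},p)$, a priori defined only for $t\ge0$, is $C^{\infty}$ up to $t=0$ with $\partial_t^{k}\psi(0,p)=k!\,a_{2k}(p)$ depending smoothly on $p$, and an application of Borel's theorem with parameters extends it to a $C^{\infty}$ germ on $\mathbf{R}\times\mathbf{R}^{k}$; this proves the one--variable statement. (Alternatively one could simply quote this as the simplest instance of Schwarz's theorem on smooth invariants of a compact group, but I prefer to keep the argument self--contained.)

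Now I would iterate. Applying the one--variable statement in $x$ (parameters $y,z,\lambda$) gives $f(x,y,z,\lambda)=f_1(x^2,y,z,\lambda)$ with $f_1$ of class $C^{\infty}$. Evenness of $f$ in $y$ only forces $f_1(u,y,z,\lambda)=f_1(u,-y,z,\lambda)$ on $\{u\ge0\}$, so before re-applying the lemma I would replace $f_1$ by its $y$--symmetrization $\widetilde f_1(u,y,z,\lambda)=\tfrac12\bigl(f_1(u,y,z,\lambda)+f_1(u,-y,z,\lambda)\bigr)$, which is $C^{\infty}$, even in $y$ for \emph{all} $u$, and agrees with $f_1$ on $\{u\ge0\}$, hence still satisfies $f(x,y,z,\lambda)=\widetilde f_1(x^2,y,z,\lambda)$. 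Applying the lemma in $y$ to $\widetilde f_1$ yields $f(x,y,z,\lambda)=f_2(x^2,y^2,z,\lambda)$; symmetrizing once more in $z$ and applying the lemma a third time gives $f(x,y,z,\lambda)=a(x^2,y^2,z^2,\lambda)$, which is the assertion. The only genuinely analytic ingredient is the one--variable Whitney step --- that a $C^{\infty}$ even function of one variable is a $C^{\infty}$ function of its square --- so that is where the real work lies; the reduction to it and the threefold iteration are bookkeeping, the one subtlety being the symmetrization above, which is needed precisely because the function obtained after the substitution $x\mapsto x^2$ is a priori defined, and known to be even in the next variable, only on the half--space $\{u\ge0\}$.
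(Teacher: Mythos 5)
Your proof is correct, but it is worth noting that the paper does not actually prove this lemma: its entire ``proof'' is a citation to Lemma VI.2.1, p.~248 of \cite{GS85}, i.e.\ the one--state--variable Whitney result that a smooth even germ $f(x,\lambda)$ can be written as $a(x^2,\lambda)$, which the paper then applies coordinate by coordinate inside the proof of the next lemma. What you have done is reconstruct the content behind that citation: your parametrized one--variable step (even Taylor polynomial, Hadamard factorization of the remainder, smoothness of $\phi(\sqrt{t},p)$ up to $t=0$, extension across $t=0$) is exactly the classical Whitney argument, and your threefold iteration with the intermediate symmetrization makes explicit the one point the cite--and--iterate route glosses over, namely that after substituting $u=x^2$ the new function is known to be even in the next variable only on the half--space $u\ge 0$. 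So your route buys self--containedness and surfaces a genuine (if small) subtlety, at the cost of length; the paper's route is an appeal to the standard reference (equivalently, to Schwarz's theorem on smooth invariants, as you remark). One minor imprecision: Borel's theorem with parameters only prescribes the jet of an extension at $t=0$ and does not by itself produce a smooth germ agreeing with $\psi(t,p)=\phi(\sqrt{t},p)$ for $t\ge 0$; what is needed is a Whitney--Seeley type extension from the closed half--space, or the standard gluing of $\psi$ on $\{t\ge 0\}$ with a Borel representative on $\{t<0\}$, which is smooth because all one--sided derivatives match and are continuous. This is routine to repair and does not affect the correctness of your argument.
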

\begin{proof}
See Lema VI, 2.1 page 248 in \cite{GS85}.
\end{proof}
We can now state
\begin{lema}\label{lema normal form}
Let us consider the $g:\mathbf{R}^3\times\mathbf{R}\rightarrow\mathbf{R}$ bifurcation problem in three state variables commuting with the action of $\mathbb{Z}_2\times\mathbb{Z}_2\times\mathbb{Z}_2$. Then there exist smooth functions $p(u,v,w,\lambda),~q(u,v,w,\lambda),~r(u,v,w,\lambda)$ such that\\
\begin{equation}\label{eq lema normal form}
\begin{array}{l}
g(x,y,z,\lambda)=(p(x^2,y^2,z^2,\lambda)x,~q(x^2,y^2,z^2,\lambda)y,~r(x^2,y^2,z^2,\lambda)z)~\mathrm{where}\\
\\
\hspace{1.8cm}p(0,0,0,0)=0,~~q(0,0,0,0)=0,~~r(0,0,0,0)=0.
\end{array}
\end{equation}
\end{lema}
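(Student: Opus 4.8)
The plan is to exploit the structure of the $\mathbb{Z}_2\times\mathbb{Z}_2\times\mathbb{Z}_2$ action, which is generated by the three coordinate reflections $\kappa\colon x\mapsto -x$, $\zeta\colon y\mapsto -y$, $\xi\colon z\mapsto -z$, and to apply Lemma~\ref{lema normal form1} componentwise. Write $g=(g_1,g_2,g_3)$. The equivariance condition \eqref{commut} must hold for each of the three generators separately, since they generate the group.

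First I would extract the parity constraints on $g_1$. Applying \eqref{commut} with $\gamma=\kappa=(-1,1,1)$ gives $g_1(-x,y,z,\lambda)=-g_1(x,y,z,\lambda)$, so $g_1$ is odd in $x$. Applying it with $\gamma=\zeta=(1,-1,1)$ gives $g_1(x,-y,z,\lambda)=g_1(x,y,z,\lambda)$, so $g_1$ is even in $y$; similarly with $\gamma=\xi$, $g_1$ is even in $z$. Since $g_1$ is odd in $x$, the germ $g_1/x$ is well defined and smooth near the origin (this is the standard fact that a smooth odd function of $x$ is $x$ times a smooth even function of $x$, i.e.\ the one-variable case of Lemma~\ref{lema normal form1} applied to $x\mapsto g_1(x,y,z,\lambda)$ with parameters $y,z,\lambda$); call it $\tilde p(x,y,z,\lambda)$, which is then even in $x$ as well as in $y$ and $z$. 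Now $\tilde p$ is even in all three state variables, so Lemma~\ref{lema normal form1} (iterated over the three variables, or applied with $\mathbf{x}=(x,y,z)$) yields a smooth $p$ with $\tilde p(x,y,z,\lambda)=p(x^2,y^2,z^2,\lambda)$, hence $g_1(x,y,z,\lambda)=p(x^2,y^2,z^2,\lambda)\,x$. The same argument applied to $g_2$ (odd in $y$, even in $x,z$) and $g_3$ (odd in $z$, even in $x,y$) produces $q$ and $r$ with $g_2=q(x^2,y^2,z^2,\lambda)\,y$ and $g_3=r(x^2,y^2,z^2,\lambda)\,z$.

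Finally I would verify the normalization. The bifurcation-problem hypothesis $g(\mathbf{0},0)=0$ is automatic from the factored form, while $(dg)_{(\mathbf{0},0)}=0$ forces the diagonal entries of the Jacobian at the origin to vanish: $\partial g_1/\partial x=p(\mathbf{0},0)$, and likewise for $q,r$, so $p(0,0,0,0)=q(0,0,0,0)=r(0,0,0,0)=0$.

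I do not expect a serious obstacle here; the only point requiring a little care is the passage from "odd in $x$" to "$x$ times something smooth and even in $x$", and then checking that this quotient retains evenness in $y$ and $z$ so that Lemma~\ref{lema normal form1} can be applied a second time — but this is immediate since dividing by $x$ does not affect the $y$- and $z$-dependence. One could also phrase the whole argument at once by invoking Theorem~\ref{poenaru}: the module of $\mathbb{Z}_2\times\mathbb{Z}_2\times\mathbb{Z}_2$-equivariant polynomial maps $\mathbf{R}^3\to\mathbf{R}^3$ is generated over the ring of invariants $\mathbf{R}[x^2,y^2,z^2]$ by the three maps $(x,0,0)$, $(0,y,0)$, $(0,0,z)$, and Po\'enaru's theorem upgrades this to the smooth category; the normalization then follows as above. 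I would present the elementary componentwise version as the main line of proof, since it is self-contained given Lemma~\ref{lema normal form1}.
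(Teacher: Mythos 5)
Your proposal is correct and follows essentially the same route as the paper: extract the parity constraints (each component odd in its own variable, even in the other two) from equivariance under the three generators, factor out $x$, $y$, $z$ by the odd-function version of Taylor's theorem, apply Lemma~\ref{lema normal form1} variable by variable to obtain $p,q,r$ as functions of $x^2,y^2,z^2,\lambda$, and use $(dg)_{(\mathbf{0},0)}=0$ to force $p(0,0,0,0)=q(0,0,0,0)=r(0,0,0,0)=0$. No substantive difference from the paper's argument.
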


\begin{proof}

We write in $g$ coordinates
\begin{equation}\label{eq lema g}
g(x,y,z,\lambda)=(a(x,y,z,\lambda),~b(x,y,z,\lambda),~c(x,y,z,\lambda)).
\end{equation}
Commutativity with equation \eqref{commut} implies
\begin{equation}\label{eq lema normal form2}
\begin{array}{l}
a(\kappa x,\zeta y,\xi z,\lambda)=\kappa a(x,y,z,\lambda),~~
b(\kappa x,\zeta y,\xi z, \lambda)=\zeta b(x,y,z,\lambda),~~
c(\kappa x,\zeta y,\xi z, \lambda)=\xi c(x,y,z,\lambda).
\end{array}
\end{equation}
The action of $\xi$ is defined by $(x,y,z)\rightarrow(x,y,-z).$ Now $\kappa$ transforms $z$ into $\bar{z}$, i.e. $(x,y,z)\rightarrow(x,-y,z)$ and the action of $\zeta$ is $+1.$ When $\kappa=-1,~\zeta=+1,~\xi=+1,$ equation \eqref{eq lema normal form2} shows that $a$ is odd in $x$ while $b$ and $c$ are even in $x,$ respectively and $c$ is even in $z$. When $\kappa=+1,~\zeta=-1,~\xi=+1,$ equation \eqref{eq lema normal form2} shows that $a$ is even in $y,$ $b$ is odd in $y$,  while $c$ is even in $y$ and $z.$\\
Conversely, if $\kappa=-1,~\zeta=+1,~\xi=-1,$ we get that $a$ is odd in $x$ while $b$ is even in $x$ and $c$ is odd in $x$ and $z$ while  when $\kappa=1,~\zeta=+1,~\xi=-1$ we get that $a$ is even in $y,$ $b$ is odd in $y$,  while $c$ is odd in $y$ and $z.$ It follows from the Taylor's theorem that we may factor these functions

\begin{equation}\label{factor}
\begin{array}{l}
a(x,y,z,\lambda)=\bar{a}(x,y,z,\lambda)x,~~~b(x,y,z,\lambda)=\bar{b}(x,y,z,\lambda)y,~~~
c(x,y,z,\lambda)=\bar{c}(x,y,z,\lambda)z\\
\end{array}
\end{equation}
where $\bar{a},~\bar{b}$ and $\bar{c}$ are even in $x,~y$ and $z.$ Applying Lemma \ref{lema normal form1} first to $x$, then to $y$ and finally to $z$ we conclude that $g$ has the desired form \eqref{eq lema normal form}.
The linear terms in $g$ vanish. The only linear terms compatible with the symmetry are
$$(p(0,0,0,0)x,~q(0,0,0,0)y,~r(0,0,0,0)z);$$
thus, $p(0,0,0,0)=q(0,0,0,0)=r(0,0,0,0)=0.$
\end{proof}

\subsection{Solution types for $g$}
Consider solving the equation $g=0$ when $g$ has the form \eqref{eq lema normal form}. There are eight solution types which occur according as the first, the second or the third factor in $p(x^2,y^2,z^2,\lambda)x$ vanishes, the first, the second or the third factor in $q(x^2,y^2,z^2,\lambda)y$ vanishes or the first, the second or the third factor in $r(x^2,y^2,z^2,\lambda)r$ vanishes. Specifically, we have the solution types
\begin{itemize}
\vspace{0.25cm}
\item[(a)] x=y=z=0,
\item[(b)] $p(x^2,0,0,\lambda)=0,$ $y=z=0,~x\neq0$,
\item[(c)] $q(0,y^2,0,\lambda)=0,$ $x=z=0,~y\neq0$,
\item[(d)] $r(0,0,z^2,\lambda)=0,$ $x=y=0,~z\neq0$,
\item[(e)] $p(x^2,y^2,0,\lambda)=0,$ $q(x^2,y^2,0,\lambda)=0,$ $z=0,~x\neq0~y\neq0$,
\item[(f)] $p(x^2,0,z^2,\lambda)=0,$ $r(x^2,0,z^2,\lambda)=0,$ $y=0,~x\neq0~z\neq0$,
\item[(g)] $q(0,y^2,z^2,\lambda)=0,$ $r(0,y^2,z^2,\lambda)=0,$ $x=0,~y\neq0~z\neq0$,
\item[(h)] $p(x^2,y^2,z^2,\lambda)=0,~q(x^2,y^2,z^2,\lambda)=0,~r(x^2,y^2,z^2,\lambda)=0,$ $x\neq0,~y\neq0,~z\neq0.$
\end{itemize}
These solution types correspond exactly to the orbit types listed in \eqref{orbits} of the action of $\mathbb{Z}_2\times\mathbb{Z}_2\times\mathbb{Z}_2$ on $\mathbf{R}^3.$ As in \cite{GS85} we use the following terminology for these five types of solutions:
\begin{itemize}
\vspace{0.25cm}
\item[(a)] trivial solutions,
\item[(b)] $x$--mode solutions,
\item[(c)] $y$--mode solutions,
\item[(d)] $z$--mode solutions,
\item[(e)] $xy$--mixed mode solutions,
\item[(f)] $xz$--mixed mode solutions,
\item[(g)] $yz$--mixed mode solutions,
\item[(h)] $xyz$--mixed mode solutions.
\end{itemize}
Each solution type has its own characteristic multiplicity. The $x$--mode, $y$--mode and $z$--mode solutions always come in pairs $(\pm x,0,0)$, $(0,\pm y,0)$, $(0,0,\pm z)$ and mixed mode solutions on the one hand four at the time, and they are $(\pm x,\pm y,0),~(\pm x,0,\pm z)$ and $(0,\pm y,\pm z),$ while $(\pm x,\pm y,\pm z)$ come eight at a time.

\section{Singularity results}\label{sec2}
We divide this section into three subsections:
\begin{itemize}
\vspace{0.25cm}
\item[(1)] Equivalence in the $\mathbb{Z}_2\times\mathbb{Z}_2\times\mathbb{Z}_2$-- symmetric context;
\item[(2)] The recognition problem for the simplest bifurcation problems with $\mathbb{Z}_2\times\mathbb{Z}_2\times\mathbb{Z}_2$-- symmetry;
\item[(3)] Linearized stability and $\mathbb{Z}_2\times\mathbb{Z}_2\times\mathbb{Z}_2$ symmetry.
\end{itemize}
\subsection{$\mathbb{Z}_2\times\mathbb{Z}_2\times\mathbb{Z}_2$--equivalence}\label{subsecequiv}
The singularities we describe here have codimension eight and modality six. We have the following remarks regarding Definition \ref{dfinsing1}.
\begin{nota}
Since S in \eqref{dfinsing1} is invertible, we see that
\begin{equation}\label{dfinsing2}
\begin{array}{l}
\Phi(\left\{(z,\lambda):g(\mathbf{x},\lambda)=0\right\})=\left\{(\mathbf{x},\lambda):h(\mathbf{x},\lambda)=0\right\}.
\end{array}
\end{equation}
Thus equivalences preserve bifurcation diagrams. They also preserve the orientation of the parameter $\lambda.$
\end{nota}
Let $g,h:\mathbf{R}^3\times\mathbf{R}\rightarrow\mathbf{R}^3$ be a bifurcation problem with three state variables commuting with the action of $\mathbb{Z}_2\times\mathbb{Z}_2\times\mathbb{Z}_2.$ We say that $g$ and $h$ are $\mathbb{Z}_2\times\mathbb{Z}_2\times\mathbb{Z}_2$--equivalent if $g$ and $h$ are equivalent in the sense of the Definition \ref{dfinsing1}, and in addition the equivalence preserves the symmetry. Recall that $g$ and $h$ are equivalent if there exists a $3\times3$ invertible matrix $S(x,y,z,\lambda)$ depending smoothly on $x,$ $y,$ $z$ and $\lambda$ and a diffeomorphism $\Phi(x,y,z,\lambda)=(Z(x,y,z,\lambda),\Lambda(\lambda))$ satisfying
\begin{equation}\label{diffeo}
\begin{array}{l}
g(x,y,z,\lambda)=S(x,y,z,\lambda)h(Z(x,y,z,\lambda),\Lambda(\lambda))\\
\end{array}
\end{equation}
such that
\begin{equation}\label{diffeo1}
\Phi(0,0,0,0)=(0,0,0,0)~\mathrm{and}~\Lambda'(0)>0.
\end{equation}
We say that the equivalence $S,~\Phi$ preserves the symmetry if
\begin{equation}\label{preserv symm}
\begin{array}{l}
(a) ~~Z(\kappa x,\zeta y,\xi z,\lambda)=(\kappa,\zeta,\xi)\cdot Z(x,y,z,\lambda),\\
\\
(b)~~ S(\kappa x,\zeta y,\xi z,\lambda)\begin{bmatrix}\kappa&0&0\\0&\zeta&0\\0&0&\xi\end{bmatrix}=
\begin{bmatrix}\kappa&0&0\\0&\zeta&0\\0&0&\xi\end{bmatrix}S(x,y,z,\lambda).
\end{array}
\end{equation}
Condition \eqref{preserv symm} restricts the form of $Z$ and $S$ in the following ways. Applying Lemma \ref{lema normal form} to show that
\begin{equation}
Z(x,y,z,\lambda)=(a(x^2,y^2,z^2,\lambda)x,b(x^2,y^2,z^2,\lambda)y,c(x^2,y^2,z^2,\lambda)z).
\end{equation}
Therefore
\begin{equation}
\begin{array}{l}
(dZ)_{(0,0,0,0)}=\begin{bmatrix}
a(0,0,0,0) &0&0\\
0&b(0,0,0,0)&0\\
0&0&c(0,0,0,0)
\end{bmatrix}
\end{array};
\end{equation}
i.e. $(dZ)_{(0,0,0,0)}$ is diagonal. Dealing now with $S$, we write out entries of $S$ as
\begin{equation}\label{matrixS}
\begin{array}{l}
\begin{bmatrix}
S_1(x,y,z,\lambda)&S_2(x,y,z,\lambda)&S_3(x,y,z,\lambda)\\
S_4(x,y,z,\lambda)&S_5(x,y,z,\lambda)&S_6(x,y,z,\lambda)\\
S_7(x,y,z,\lambda)&S_8(x,y,z,\lambda)&S_9(x,y,z,\lambda)
\end{bmatrix}.
\end{array}
\end{equation}
A calculation using \eqref{preserv symm} $(b)$ shows that $S_1,~S_5$ and $S_9$ are even in $x,~y$ and $z$, while $S_2,~S_3,~S_4,~S_6,~S_7$ and $S_8$ are odd in $x,~y$ and $z$. Therefore, Lemma \ref{lema normal form1} together with Taylor's theorem implies that
\begin{equation}\label{matrixS1}
S(x,y,z,\lambda)=
\begin{array}{l}
\begin{bmatrix}
d_1(x^2,y^2,z^2,\lambda)&d_2(x^2,y^2,z^2,\lambda)xyz&d_3(x^2,y^2,z^2,\lambda)xyz\\
d_4(x^2,y^2,z^2,\lambda)xyz&d_5(x^2,y^2,z^2,\lambda)&d_6(x^2,y^2,z^2,\lambda)xyz\\
d_7(x^2,y^2,z^2,\lambda)xyz&d_8(x^2,y^2,z^2,\lambda)xyz&d_9(x^2,y^2,z^2,\lambda)
\end{bmatrix}.
\end{array}
\end{equation}
In particular $S(0,0,0,0)$ is diagonal and has the form
\begin{equation}\label{matrixS1ty}
S(x,y,z,\lambda)=
\begin{array}{l}
\begin{bmatrix}
d_1(0,0,0,0)&0&0\\
0&d_5(0,0,0,0)&0\\
0&0&d_9(0,0,0,0)
\end{bmatrix}.
\end{array}
\end{equation}
In order to have $\mathbb{Z}_2\times\mathbb{Z}_2\times\mathbb{Z}_2$--equivalences preserved linear stability (wihich will be discussed in detail in the next section), we shall require that $\mathbb{Z}_2\times\mathbb{Z}_2\times\mathbb{Z}_2$--equivalences satisfy
\begin{equation}\label{shall}
\begin{array}{l}
a(0,0,0,0)>0,~~b(0,0,0,0)>0,~~c(0,0,0,0)>0,~~\\
\\
d_1(0,0,0,0)>0,~~d_5(0,0,0,0)>0,~~d_9(0,0,0,0)>0.
\end{array}
\end{equation}
So far we have proved
\begin{prop}
Two bifurcation problems $g$ and $h,$ both commuting with the group $\mathbb{Z}_2\times\mathbb{Z}_2\times\mathbb{Z}_2,$ are $\mathbb{Z}_2\times\mathbb{Z}_2\times\mathbb{Z}_2$--equivalent if there exists $S$ and $\Phi=(Z,\Lambda)$ as above satisfying \eqref{diffeo}, \eqref{diffeo1}, \eqref{preserv symm} \eqref{matrixS}, \eqref{matrixS1} and \eqref{matrixS1ty}.
\end{prop}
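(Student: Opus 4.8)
The plan is to observe that the proposition is merely the assembly of the normal-form reductions already carried out in this subsection, so the argument is a matter of checking that nothing further is needed. By Definition \ref{dfinsing1}, a $\mathbb{Z}_2\times\mathbb{Z}_2\times\mathbb{Z}_2$--equivalence of $g$ and $h$ is by definition a pair $(S,\Phi)$ with $\Phi=(Z,\Lambda)$, $S$ an invertible $3\times 3$ matrix germ, satisfying the conjugacy identity \eqref{diffeo}, the normalization $\Phi(\mathbf{0})=\mathbf{0}$, $\Lambda'(0)>0$ of \eqref{diffeo1}, and the two symmetry conditions \eqref{preserv symm}. Thus the only content to verify is that, once \eqref{preserv symm} is imposed, $Z$ and $S$ are automatically of the restricted shapes \eqref{matrixS1}, so that in particular $(dZ)_{\mathbf{0}}$ and $S(\mathbf{0})$ are diagonal as in \eqref{matrixS1ty}; listing these forms among the hypotheses of the proposition is then harmless redundancy.

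First I would treat $Z$. Condition \eqref{preserv symm}(a) says precisely that $Z(\cdot,\lambda)$ commutes with the standard action of $\mathbb{Z}_2\times\mathbb{Z}_2\times\mathbb{Z}_2$ on $\mathbf{R}^3$ and that $Z(\mathbf{0})=0$, so Lemma \ref{lema normal form} applies verbatim to $Z$ in place of $g$ and yields $Z(x,y,z,\lambda)=(a(x^2,y^2,z^2,\lambda)x,\,b(x^2,y^2,z^2,\lambda)y,\,c(x^2,y^2,z^2,\lambda)z)$; differentiating at the origin then gives the diagonal matrix $(dZ)_{\mathbf{0}}=\mathrm{diag}(a(\mathbf{0}),b(\mathbf{0}),c(\mathbf{0}))$.

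Next I would treat $S$. Writing $S=(S_{ij})$ as in \eqref{matrixS} and setting $(\varepsilon_1,\varepsilon_2,\varepsilon_3)=(\kappa,\zeta,\xi)$, conjugation by $\mathrm{diag}(\kappa,\zeta,\xi)$ multiplies the $(i,j)$ entry of $S$ by $\varepsilon_i\varepsilon_j$, so \eqref{preserv symm}(b) reads $S_{ij}(\kappa x,\zeta y,\xi z,\lambda)=\varepsilon_i\varepsilon_j\,S_{ij}(x,y,z,\lambda)$ for every choice of signs. Running over the three generators separately forces each diagonal entry $S_1,S_5,S_9$ to be even in $x$, in $y$ and in $z$, and each off-diagonal entry $S_2,S_3,S_4,S_6,S_7,S_8$ to be odd in $x$, in $y$ and in $z$ simultaneously. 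By Lemma \ref{lema normal form1} the even entries are smooth functions of $(x^2,y^2,z^2,\lambda)$; applying Taylor's theorem in each variable in turn to a triply-odd entry shows that it is divisible by $x$, by $y$ and by $z$, hence equal to $xyz$ times a smooth function of $(x^2,y^2,z^2,\lambda)$. This is exactly \eqref{matrixS1}, and evaluating at the origin gives the diagonal matrix \eqref{matrixS1ty}. Finally, the sign conditions \eqref{shall} and the inequality $\Lambda'(0)>0$ are not consequences of the above but extra normalizations built into the admissible class of equivalences, to be used in the next subsection to guarantee that the equivalence preserves linear stability and the orientation of $\lambda$. Collecting these remarks establishes the proposition. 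The only step requiring any care is the parity bookkeeping for the entries of $S$, and even that is purely mechanical; there is no genuine obstacle here, the proposition being essentially a repackaging of the definition together with Lemmas \ref{lema normal form} and \ref{lema normal form1}.
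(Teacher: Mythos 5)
Your overall route is the same as the paper's: read the proposition as ``the definition of $\mathbb{Z}_2\times\mathbb{Z}_2\times\mathbb{Z}_2$--equivalence (\eqref{diffeo}, \eqref{diffeo1}, \eqref{preserv symm}) plus the derived restricted shapes of $Z$ and $S$,'' obtain the form of $Z$ from Lemma \ref{lema normal form} (correctly using only its factorization half, since $(dZ)_{(0,0,0,0)}$ must be invertible rather than zero), and obtain the form of $S$ from a parity argument plus Lemma \ref{lema normal form1} and Taylor's theorem. That is exactly the derivation the paper summarizes with ``So far we have proved.''

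However, the parity bookkeeping for $S$ --- the one step you yourself flag as needing care --- is wrong, and it is the same slip that appears in the paper's display \eqref{matrixS1}. From your own (correct) relation $S_{ij}(\kappa x,\zeta y,\xi z,\lambda)=\varepsilon_i\varepsilon_j\,S_{ij}(x,y,z,\lambda)$, the $(1,2)$ entry transforms by $\kappa\zeta$, which does not involve $\xi$: taking $\kappa=\zeta=1$, $\xi=-1$ gives $S_2(x,y,-z,\lambda)=S_2(x,y,z,\lambda)$, so $S_2$ is odd in $x$ and $y$ but \emph{even} in $z$, not triply odd. Hence the correct factorization is $S_2=\tilde d_2(x^2,y^2,z^2,\lambda)\,xy$, and likewise $S_4\propto xy$, $S_3,S_7\propto xz$, $S_6,S_8\propto yz$; an entry of the form $d(x^2,y^2,z^2,\lambda)\,xyz$ transforms by $\kappa\zeta\xi$ and is in fact incompatible with \eqref{preserv symm}$(b)$ unless it vanishes identically, so the ``harmless redundancy'' claim fails for \eqref{matrixS1} as written. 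The corrected off-diagonal monomials $xy$, $xz$, $yz$ are precisely the generators $s_{12}=xy$, $s_{13}=xz$, $s_{23}=yz$ that the paper itself uses later in the proof of Theorem \ref{teo42}, and they still vanish at the origin, so the conclusions actually needed --- $S(0,0,0,0)$ diagonal as in \eqref{matrixS1ty}, $(dZ)_{(0,0,0,0)}$ diagonal, and the sufficiency statement of the proposition --- are unaffected. So your proposal faithfully reproduces the paper's argument, including its error; with the off-diagonal factors corrected as above it becomes a sound (and essentially identical) proof.
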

\subsection{The recognition problem for the simplest examples}
Let $g$ be a bifurcation problem with three state variables commuting with the group $\mathbb{Z}_2\times\mathbb{Z}_2\times\mathbb{Z}_2.$ Thus $g$ has the form \eqref{eq lema normal form}. We split off the lowest terms in \eqref{eq lema normal form}, i.e.
\begin{equation}\label{lowesta}
g(x,y,z,\lambda)=k(x,y,z,\lambda)+\mathrm{hot}
\end{equation}
where
\begin{equation}\label{lowestb}
k(x,y,z,\lambda)=(Ax^3+Bxy^2+Cxz^2+\alpha\lambda x,Dyx^2+Ey^3+Fyz^2+\beta\lambda y,Gzx^2+Hzy^2+Iz^3+\gamma\lambda z).
\end{equation}
The higher-order terms in \eqref{lowesta} include monomials $x^ry^s\lambda^t,~x^rz^s\lambda^t$ and $y^rz^s\lambda^t$ satisfying at least one of the following conditions:
\begin{itemize}
\vspace{0.25cm}
\item[(a)] $r+s\geqslant5,$
\item[(b)] $t=1, ~r+s\geqslant3,$
\item[(c)] $t\geqslant2.$
\end{itemize}
Before proceeding with our analysis, we shall introduce the notion of nondegenerate bifurcation problem in three state variables. The bifurcation is nondegenerate if it satisfies several inequalities which are invariants of equivalence. Our first nondegeneracy condition is
\begin{equation}\label{H1}
g_{\lambda}(0,0,0)\neq0.
\end{equation}
Let
$$\bar{k}(x,y,z)=(Ax^2+By^2+Cz^2,Dx^2+Ey^2+Fz^2,Gx^2+Hy^2+Iz^2).$$
Then our second nondegeneracy condition is
\begin{equation}\label{H2}
\mathrm{minors}(\mathrm{det}(J(\bar{k})))\neq0,
\end{equation}
where $J(\bar{k})$ is the Jacobian matrix of $\bar{k}.$ This condition is to constrain the three roots of the determinant of the Jacobian matrix to be different. Taking into account the generic nondegeneracy conditions \eqref{H2} and (as it will be seen in Theorem \ref{teoprinc}), the specific conditions dictated by the choice of the parameters in \eqref{propnon1}, we have the following definition.
\begin{defi}
The bifurcation problem $g$ in \eqref{lowesta}--\eqref{lowestb} is nondegenerate if all the following conditions are satisfied:
\begin{equation}\label{conditions}
\begin{array}{l}
\hspace{3cm}A\neq0,~E\neq0,~I\neq0,~\alpha\neq0,~\beta\neq0,~\gamma\neq0,\\
\\
\hspace{0.25cm}B|\beta|\neq|E\alpha|,~D|\alpha|\neq|A\beta|,~G|\alpha|\neq|A\gamma|,
C|\gamma|\neq|I\alpha|,~F|\gamma|\neq|I\beta|,~H|\beta|\neq|E\gamma|,\\
\\
\hspace{3.6cm}AE\neq BD,~BF\neq CE,~AF\neq CD.
\end{array}
\end{equation}
\end{defi}
Our main goal in this subsection is to state and prove the Theorem \ref{teoprinc}, in which we solve the recognition problem for nondegenerate bifurcation problems commuting with the $\mathbb{Z}_2\times\mathbb{Z}_2\times\mathbb{Z}_2.$ However, before even stating it, we need a sequence of three preliminary results. The first couple of results recall the Theorem XIV 1.3 and the Proposition XIV 1.4, both from \cite{GS85}, whose proofs can be found in the same reference. The third result is one of our particular developments; while it stands as a result on its own, it also constitutes the second part of the proof of Theorem \ref{teoprinc}. We have:
\begin{priteo}[Theorem XIV 1.3, \cite{GS85}]\label{teo13}
Let $\Gamma$ be a compact Lie group acting on $V.$ Let $h\in\overrightarrow{\mathscr{E}}_{\mathbf{x},\lambda}(\Gamma)$ be a $\Gamma$--equivariant bifurcation problem and let $p$ be any germ in $\overrightarrow{\mathscr{E}}_{\mathbf{x},\lambda}(\Gamma).$ Suppose that
$$RT(h+tp,\Gamma)=RT(h,\Gamma)$$
for all $t\in[0,1].$ Then $h+tp$ is strongly $\Gamma$--equivalent to $h$ for all $t\in[0,1].$
\end{priteo}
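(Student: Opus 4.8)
The plan is to run the homotopy (Thom--Levine--Mather) argument inside the $\Gamma$--equivariant modules introduced above. Put $H_t=h+tp$ for $t\in[0,1]$, so that $H_0=h$, $H_1=h+p$, and $\partial H_t/\partial t=p$ identically. I will construct, for each $t_0\in[0,1]$, a smooth one--parameter family of strong $\Gamma$--equivalences $(M_t,\Psi_t)$, $t\in[0,t_0]$, with $M_0=I$ and $\Psi_0=\mathrm{id}$, carrying $H_{t_0}$ onto $H_0=h$; the case $t_0=1$ is the assertion of the theorem, and every intermediate $t_0$ follows by restricting the construction to $[0,t_0]$.

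The first step is to pass to an infinitesimal statement. Recall from \cite{GS85} that $RT(g,\Gamma)$ is an $\mathscr{E}_{\mathbf{x},\lambda}(\Gamma)$--submodule of $\overrightarrow{\mathscr{E}}_{\mathbf{x},\lambda}(\Gamma)$, in particular a real linear subspace, and that taking $S$ equal to the identity matrix and $Z\equiv0$ shows $g\in RT(g,\Gamma)$ for every $g$. Hence both $h$ and $H_t=h+tp$ belong to $RT(H_t,\Gamma)=RT(h,\Gamma)$, and so does their difference $tp$. For $t\neq0$ this gives $p\in RT(H_t,\Gamma)$, and since the subspace $RT(H_t,\Gamma)=RT(h,\Gamma)$ does not depend on $t$ we also obtain $p\in RT(H_0,\Gamma)$. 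Thus for every $t\in[0,1]$ there exist a $\Gamma$--equivariant matrix germ $\tilde S_t$ and a $\Gamma$--equivariant vector germ $\tilde Z_t$ with $\tilde Z_t(\mathbf{0},0)=0$, obeying the equivariance constraints of \eqref{eq2def1}, such that
$$p=\tilde S_t\cdot H_t+(d_{\mathbf{x}}H_t)\,\tilde Z_t\qquad\text{for each fixed }t .$$

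The technical heart is to choose $\tilde S_t$ and $\tilde Z_t$ \emph{smoothly} in $t$. Here I would regard $H=h+tp$ and $p$ as germs in the larger ring $\mathscr{E}_{\mathbf{x},\lambda,t}$ (with $\Gamma$ acting trivially on $t$), use Po\'enaru's Theorem \ref{poenaru} to know that the relevant equivariant modules are finitely generated over the appropriate rings of invariant germs, and then apply the parametrized division/preparation machinery of \cite{GS85} together with Nakayama's Lemma \ref{lemanaka} to promote the pointwise--in--$t$ solution above to one depending smoothly on $(\mathbf{x},\lambda,t)$. This is the step I expect to be the main obstacle; it is also the only one genuinely borrowed from \cite{GS85}, everything around it being formal.

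With a smooth family $(\tilde S_t,\tilde Z_t)$ in hand, I finish by integrating two ordinary differential equations. Solving $\partial_t\Psi_t(\mathbf{x},\lambda)=-\tilde Z_t\bigl(\Psi_t(\mathbf{x},\lambda),\lambda\bigr)$ with $\Psi_0=\mathrm{id}$ gives a family of local diffeomorphisms of $\mathbf{R}^3\times\mathbf{R}$; since $\tilde Z_t$ commutes with $\Gamma$, has no $\lambda$--component, and vanishes at the origin, uniqueness of solutions (applied to $\gamma\cdot\Psi_t(\gamma^{-1}\cdot\mathbf{x},\lambda)$) forces each $\Psi_t$ to commute with $\Gamma$, to fix the origin, and to act as the identity in $\lambda$, so $\Psi_t$ is a strong $\Gamma$--equivalence coordinate change. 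Solving next the linear equation $\partial_t M_t=-M_t\,(\tilde S_t\circ\Psi_t)$ with $M_0=I$ produces invertible $\Gamma$--equivariant matrix germs $M_t$. Differentiating $Q_t:=M_t\cdot(H_t\circ\Psi_t)$ and substituting the infinitesimal identity gives $\partial_tQ_t=M_t\bigl[\,p-(d_{\mathbf{x}}H_t)\tilde Z_t-\tilde S_tH_t\,\bigr]\circ\Psi_t=0$, hence $Q_t\equiv Q_0=h$. At $t=1$ this reads $(h+p)\circ\Psi_1=M_1^{-1}h$, i.e. $h+p=\bigl(M_1^{-1}\circ\Psi_1^{-1}\bigr)\cdot h\bigl(\Psi_1^{-1}(\cdot)\bigr)$, which is exactly a strong $\Gamma$--equivalence ($\Lambda\equiv\lambda$). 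It remains only to check the normalizations of \eqref{eq2def1}: the linear parts at the origin of $\Psi_1$ and $M_1$ are joined to the identity through the flows, hence lie in $\mathscr{L}_\Gamma(V)^0$, and $\Lambda'\equiv1>0$; the same computation restricted to $[0,t_0]$ yields the conclusion for arbitrary $t_0\in[0,1]$.
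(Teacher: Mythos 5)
A preliminary remark on the comparison: the paper does not prove Theorem \ref{teo13} at all — it is quoted from \cite{GS85} (Theorem XIV 1.3) with the proof explicitly delegated to that reference — so the relevant benchmark is the Golubitsky--Schaeffer argument. Your outline is precisely that argument (the Thom--Levine homotopy method): the formal reduction is right ($g\in RT(g,\Gamma)$ with $S=I$, $Z\equiv 0$, hence $tp=(h+tp)-h\in RT(h,\Gamma)$ and so $p\in RT(h+tp,\Gamma)$ for every $t$, using that $RT$ is an $\mathscr{E}_{\mathbf{x},\lambda}(\Gamma)$--module as in Proposition \ref{prop14}), and the concluding integration step is the standard one: the computation $\partial_tQ_t=M_t\bigl[p-\tilde S_tH_t-(dH_t)\tilde Z_t\bigr]\circ\Psi_t=0$ is correct, and equivariance, invertibility, fixing of the origin and the connectedness normalizations of \eqref{eq2def1} do follow from uniqueness of solutions of the two ODEs exactly as you say.

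The genuine gap is the step you yourself flag and then skip: producing $(\tilde S_t,\tilde Z_t)$ depending \emph{smoothly} on $t$. This is not a routine technicality that can be outsourced in one sentence; it is the actual content of the theorem, and it is exactly where the hypothesis that $RT(h+tp,\Gamma)=RT(h,\Gamma)$ for \emph{all} $t$ (rather than the mere pointwise membership $p\in RT(h+tp,\Gamma)$, which your first step already gives) does its work. Solvability for each frozen $t$ gives no control on $t$--dependence, and without a smooth selection your ODEs for $\Psi_t$ and $M_t$ cannot even be written down. In \cite{GS85} this is handled by a separate argument: one works in the extended ring $\mathscr{E}_{\mathbf{x},\lambda,t}$, uses finite generation of the equivariant modules (Theorem \ref{poenaru}, Proposition \ref{prop14}) together with Nakayama's Lemma \ref{lemanaka} to show that, for $t$ in a neighborhood of any fixed $t_0$, the infinitesimal equation can be solved with germs in $(\mathbf{x},\lambda,t)$; one then covers $[0,1]$ by finitely many such subintervals and concludes using compactness and the transitivity of strong $\Gamma$--equivalence (so a single global-in-$t$ family, as in your construction, is not needed and is not what the constancy hypothesis directly provides). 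You name the right tools, but as written the proposal is a correct skeleton of the cited proof with its central lemma left unproved, not a complete proof.
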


\begin{prop}[Proposition  XIV 1.4, \cite{GS85}]\label{prop14}
Let $\Gamma$ be a compact Lie group acting on $V$ and let $h\in\overrightarrow{\mathscr{E}}_{\mathbf{x},\lambda}(\Gamma).$ Then $RT(h,\Gamma)$
is a finitely generated submodule of $\overrightarrow{\mathscr{E}}_{\mathbf{x},\lambda}(\Gamma)$ over the ring $\mathscr{E}_{\mathbf{x},\lambda}(\Gamma).$ Moreover, $RT(h,\Gamma)$ is generated by
$$S_1h,\ldots,S_th;(dh)(X_1),\ldots,(dh)(X_s)$$
where $S_1,\ldots,S_t$ generate $\overleftrightarrow{\mathscr{E}}_{x,\lambda}(\Gamma)$ and $X_1,\ldots,X_s$ generate $\overrightarrow{\mathscr{M}}_{\mathbf{x},\lambda}(\Gamma).$
\end{prop}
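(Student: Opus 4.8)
The plan is to recognise $RT(h,\Gamma)$ as the sum of two submodules of $\overrightarrow{\mathscr{E}}_{\mathbf{x},\lambda}(\Gamma)$, each of which is the image of a finitely generated $\mathscr{E}_{\mathbf{x},\lambda}(\Gamma)$--module under a module homomorphism, and then to read the generators off from those of the sources. First I would rewrite the definition of the restricted tangent space as
$$RT(h,\Gamma)=\overleftrightarrow{\mathscr{E}}_{\mathbf{x},\lambda}(\Gamma)\cdot h\;+\;(dh)\cdot\overrightarrow{\mathscr{M}}_{\mathbf{x},\lambda}(\Gamma),$$
that is, as the set of all $Sh+(dh)Z$ with $S$ an equivariant matrix germ and $Z$ an equivariant germ vanishing at the origin. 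One checks immediately that for an invariant scalar germ $f$ one has $fS\in\overleftrightarrow{\mathscr{E}}_{\mathbf{x},\lambda}(\Gamma)$, $fZ\in\overrightarrow{\mathscr{M}}_{\mathbf{x},\lambda}(\Gamma)$, together with $(fS)h=f(Sh)$ and $(dh)(fZ)=f\,(dh)(Z)$; hence each of the two summands above is an $\mathscr{E}_{\mathbf{x},\lambda}(\Gamma)$--submodule of $\overrightarrow{\mathscr{E}}_{\mathbf{x},\lambda}(\Gamma)$, and $RT(h,\Gamma)$ is their sum.

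Next I would establish finite generation of the two source modules. By Po\'enaru's theorem (Theorem \ref{poenaru}) the module $\overrightarrow{\mathscr{E}}_{\mathbf{x},\lambda}(\Gamma)$ is finitely generated over $\mathscr{E}_{\mathbf{x},\lambda}(\Gamma)$; the same argument, applied to equivariant maps valued in the representation $\mathrm{End}(\mathbf{R}^3)$ (on which $\gamma$ acts by conjugation $S\mapsto\gamma S\gamma^{-1}$), shows that $\overleftrightarrow{\mathscr{E}}_{\mathbf{x},\lambda}(\Gamma)$ is finitely generated over $\mathscr{E}_{\mathbf{x},\lambda}(\Gamma)$; fix generators $S_1,\dots,S_t$ of the latter. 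For $\overrightarrow{\mathscr{M}}_{\mathbf{x},\lambda}(\Gamma)$ I would use that $\mathscr{E}_{\mathbf{x},\lambda}(\Gamma)$ is Noetherian — by Schwarz's theorem it is a quotient of a ring of germs in finitely many variables (the polynomial invariants together with $\lambda$), which is Noetherian — so that $\overrightarrow{\mathscr{E}}_{\mathbf{x},\lambda}(\Gamma)$, being finitely generated over it, is a Noetherian module, whence its submodule $\overrightarrow{\mathscr{M}}_{\mathbf{x},\lambda}(\Gamma)$ is finitely generated; fix generators $X_1,\dots,X_s$ of it.

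Finally I would read off the generators of $RT(h,\Gamma)$. Left multiplication by $h$ is an $\mathscr{E}_{\mathbf{x},\lambda}(\Gamma)$--module homomorphism $\overleftrightarrow{\mathscr{E}}_{\mathbf{x},\lambda}(\Gamma)\to\overrightarrow{\mathscr{E}}_{\mathbf{x},\lambda}(\Gamma)$ whose image is the first summand, so that summand is generated by $S_1h,\dots,S_th$; likewise $Z\mapsto(dh)(Z)$ is an $\mathscr{E}_{\mathbf{x},\lambda}(\Gamma)$--module homomorphism $\overrightarrow{\mathscr{M}}_{\mathbf{x},\lambda}(\Gamma)\to\overrightarrow{\mathscr{E}}_{\mathbf{x},\lambda}(\Gamma)$ whose image is the second summand, so that summand is generated by $(dh)(X_1),\dots,(dh)(X_s)$. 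Since a sum of two finitely generated submodules is finitely generated and is generated by the union of the two generating sets, $RT(h,\Gamma)$ is finitely generated, by $S_1h,\dots,S_th,(dh)(X_1),\dots,(dh)(X_s)$, which is exactly the assertion. The only inputs here that are not purely formal are Po\'enaru's theorem (finite generation of equivariant germs over the invariant ring) and the Noetherianity of $\mathscr{E}_{\mathbf{x},\lambda}(\Gamma)$; granting these, the content of the proposition reduces to the linearity of the two maps $S\mapsto Sh$ and $Z\mapsto(dh)Z$, so I expect no serious obstacle — the one point requiring care is the treatment of $\overrightarrow{\mathscr{M}}_{\mathbf{x},\lambda}(\Gamma)$, which is not literally an instance of Po\'enaru's theorem but follows from it together with the Noetherian property.
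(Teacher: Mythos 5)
The paper itself offers no proof of this proposition --- it is explicitly recalled from Golubitsky--Schaeffer, ``whose proofs can be found in the same reference'' --- so the only comparison available is with the standard argument there, and your overall route reproduces it: write $RT(h,\Gamma)$ as the sum of the images of the two $\mathscr{E}_{\mathbf{x},\lambda}(\Gamma)$--linear maps $S\mapsto Sh$ and $Z\mapsto (dh)Z$, get finite generation of the source modules, and read off the generators $S_ih$, $(dh)(X_j)$. That skeleton is correct (you should also record the routine check that $Sh$ and $(dh)Z$ are again $\Gamma$--equivariant, using $S(\gamma\cdot\mathbf{x},\lambda)\gamma=\gamma S(\mathbf{x},\lambda)$ and the identity $(dh)_{\gamma\cdot\mathbf{x}}\,\gamma=\gamma\,(dh)_{\mathbf{x}}$ obtained by differentiating the equivariance of $h$; otherwise the claim that the two summands are submodules of $\overrightarrow{\mathscr{E}}_{\mathbf{x},\lambda}(\Gamma)$ is unsupported). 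The use of Po\'enaru's theorem for the matrix germs, via the conjugation action on $\mathrm{End}(V)$, is also the standard step.

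There is, however, a genuine error in your treatment of $\overrightarrow{\mathscr{M}}_{\mathbf{x},\lambda}(\Gamma)$, precisely at the point you yourself flag as delicate: $\mathscr{E}_{\mathbf{x},\lambda}(\Gamma)$ is \emph{not} Noetherian. Rings of $C^{\infty}$ germs in finitely many variables are never Noetherian (already in one variable the ideal of flat germs is not finitely generated), unlike analytic or formal power series rings; and Schwarz's theorem only exhibits $\mathscr{E}_{\mathbf{x},\lambda}(\Gamma)$ as the image of such a non-Noetherian germ ring, so ``quotient of a Noetherian ring'' is not available. Consequently the inference ``finitely generated module over a Noetherian ring, hence every submodule is finitely generated'' collapses, and finite generation of $\overrightarrow{\mathscr{M}}_{\mathbf{x},\lambda}(\Gamma)$ needs a different justification. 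One correct route: by Schwarz plus Hadamard's lemma the maximal ideal $\mathscr{M}(\Gamma)\subset\mathscr{E}_{\mathbf{x},\lambda}(\Gamma)$ is generated by the Hilbert basis $u_1,\dots,u_k$ of the invariants together with $\lambda$; then $\overrightarrow{\mathscr{M}}_{\mathbf{x},\lambda}(\Gamma)\supseteq\mathscr{M}(\Gamma)\cdot\overrightarrow{\mathscr{E}}_{\mathbf{x},\lambda}(\Gamma)$, and modulo this submodule it lies in the finite-dimensional real vector space spanned by the classes of the Po\'enaru generators $g_1,\dots,g_r$; hence $\overrightarrow{\mathscr{M}}_{\mathbf{x},\lambda}(\Gamma)$ is generated by finitely many lifts of a basis of that quotient together with the products $u_jg_i$ and $\lambda g_i$. (In the concrete situation of this paper the issue is invisible, since $\mathrm{Fix}(\mathbb{Z}_2\times\mathbb{Z}_2\times\mathbb{Z}_2)=\{0\}$ forces every equivariant to vanish at the origin, so $\overrightarrow{\mathscr{M}}=\overrightarrow{\mathscr{E}}$ and Po\'enaru alone suffices; but the proposition is stated for an arbitrary compact group, so the repair is needed.)
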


\begin{priteo}\label{teo42}
Suppose that $g$ is a nondegenerate $\mathbb{Z}_2\times\mathbb{Z}_2\times\mathbb{Z}_2$--equivariant bifurcation problem. Then $g$ is strongly $\mathbb{Z}_2\times\mathbb{Z}_2\times\mathbb{Z}_2$--equivalent to $h.$
\end{priteo}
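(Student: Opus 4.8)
The strategy is to exhibit $g$ as a higher‑order perturbation of $h$ and apply the unfolding‑type Theorem~\ref{teo13}; the real content is to show that the restricted tangent space $RT(h,\Gamma)$ is large, which is exactly where the nondegeneracy conditions \eqref{conditions} enter, as the nonvanishing of an explicit determinant. Write $g=h+p$, where $h=k$ is the truncation \eqref{lowesta}--\eqref{lowestb} and $p=\mathrm{hot}$ collects the higher‑order terms. I work throughout in the invariant coordinates $u=x^{2}$, $v=y^{2}$, $w=z^{2}$ (and $\lambda$), in which Lemma~\ref{lema normal form} identifies the equivariant germ $(Px,Qy,Rz)$ with the triple $(P,Q,R)\in\mathscr{E}_{u,v,w,\lambda}^{3}$; let $\mathfrak{m}$ denote the maximal ideal of $\mathscr{E}_{u,v,w,\lambda}$. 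The description (a)--(c) of the higher‑order terms says precisely that $p\in\mathfrak{m}^{2}\,\overrightarrow{\mathscr{E}}_{\mathbf{x},\lambda}(\Gamma)$. This submodule is intrinsic, and one checks directly that the matrix multiplications $S_{j}(\cdot)$ and the derivations $(d\,\cdot)(X_{i})$ occurring in the definition of $RT$ map it into itself; hence, once one proves the single inclusion $\mathfrak{m}^{2}\,\overrightarrow{\mathscr{E}}_{\mathbf{x},\lambda}(\Gamma)\subseteq RT(h,\Gamma)$, it follows that $RT(h+tp,\Gamma)=RT(h,\Gamma)$ for every $t\in[0,1]$, and Theorem~\ref{teo13} then gives that $g=h+p$ is strongly $\mathbb{Z}_{2}\times\mathbb{Z}_{2}\times\mathbb{Z}_{2}$--equivalent to $h$.

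Next I compute a finite generating set for $RT(h,\Gamma)$ via Proposition~\ref{prop14}: it is generated over $\mathscr{E}_{u,v,w,\lambda}$ by the germs $S_{j}h$ together with the $(dh)(X_{i})$, where $X_{1}=(x,0,0)$, $X_{2}=(0,y,0)$, $X_{3}=(0,0,z)$ generate $\overrightarrow{\mathscr{M}}_{\mathbf{x},\lambda}(\Gamma)=\overrightarrow{\mathscr{E}}_{\mathbf{x},\lambda}(\Gamma)$ and the $S_{j}$ are the nine generators of $\overleftrightarrow{\mathscr{E}}_{\mathbf{x},\lambda}(\Gamma)$ --- the three diagonal units and the six off‑diagonal germs of \eqref{matrixS1}. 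Substituting $h=k$ and translating to the $(u,v,w,\lambda)$ coordinates produces twelve explicit generators: the three ``columns'' $(dk)(X_{1})=(2Au+\widetilde{p},\,2Du,\,2Gu)$, $(dk)(X_{2})=(2Bv,\,2Ev+\widetilde{q},\,2Hv)$, $(dk)(X_{3})=(2Cw,\,2Fw,\,2Iw+\widetilde{r})$, where $\widetilde{p}=Au+Bv+Cw+\alpha\lambda$ and $\widetilde{q},\widetilde{r}$ are defined analogously; the three diagonal germs $(\widetilde{p},0,0)$, $(0,\widetilde{q},0)$, $(0,0,\widetilde{r})$; and the six off‑diagonal germs $(v\widetilde{q},0,0)$, $(w\widetilde{r},0,0)$, $(0,u\widetilde{p},0)$, $(0,w\widetilde{r},0)$, $(0,0,u\widetilde{p})$, $(0,0,v\widetilde{q})$.

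By Nakayama's Lemma~\ref{lemanaka}, the inclusion $\mathfrak{m}^{2}\,\overrightarrow{\mathscr{E}}_{\mathbf{x},\lambda}(\Gamma)\subseteq RT(h,\Gamma)$ is equivalent to $\mathfrak{m}^{2}\,\overrightarrow{\mathscr{E}}_{\mathbf{x},\lambda}(\Gamma)\subseteq RT(h,\Gamma)+\mathfrak{m}^{3}\,\overrightarrow{\mathscr{E}}_{\mathbf{x},\lambda}(\Gamma)$, i.e.\ to a statement in the $30$‑dimensional quotient $\mathfrak{m}^{2}\,\overrightarrow{\mathscr{E}}_{\mathbf{x},\lambda}(\Gamma)/\mathfrak{m}^{3}\,\overrightarrow{\mathscr{E}}_{\mathbf{x},\lambda}(\Gamma)$, with basis the ten degree‑two monomials in $u,v,w,\lambda$ in each of the three components. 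Multiplying the twelve generators above by $1$ and by $u,v,w,\lambda$ and discarding everything in $\mathfrak{m}^{3}\,\overrightarrow{\mathscr{E}}_{\mathbf{x},\lambda}(\Gamma)$ yields exactly $30$ vectors of this quotient, and the claim reduces to their linear independence. I would record their coordinates as a $30\times30$ matrix and, by elementary row and column operations carried out component‑by‑component and monomial‑by‑monomial (e.g.\ pivoting first on the three $\lambda^{2}$‑columns, which splits off the factor $\alpha\beta\gamma$), bring it to block‑triangular form whose determinant equals, up to a nonzero scalar, a product of the quantities $A$, $E$, $I$, $\alpha$, $\beta$, $\gamma$, $AE-BD$, $AF-CD$, $BF-CE$ and $B\beta\pm E\alpha$, $D\alpha\pm A\beta$, $G\alpha\pm A\gamma$, $C\gamma\pm I\alpha$, $F\gamma\pm I\beta$, $H\beta\pm E\gamma$. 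All of these are nonzero precisely by the nondegeneracy conditions \eqref{conditions}, so the $30$ vectors form a basis, Nakayama gives $\mathfrak{m}^{2}\,\overrightarrow{\mathscr{E}}_{\mathbf{x},\lambda}(\Gamma)\subseteq RT(h,\Gamma)$, and the reduction of the first paragraph finishes the argument.

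I expect the genuine obstacle to be precisely this last determinant: keeping the $30\times30$ computation under control and checking that each inequality of \eqref{conditions} is both used and sufficient for nonvanishing. I would organize it hierarchically, mirroring the three blocks of \eqref{conditions}. First restrict $g$ to the three coordinate axes, where $RT$ collapses to that of a $\mathbb{Z}_{2}$ pitchfork and forces $A,E,I,\alpha,\beta,\gamma\neq0$; this clears the monomials $u^{2},v^{2},w^{2},\lambda^{2}$ and the single‑variable spatial--parameter monomials. Then restrict to the coordinate planes $x=0$, $y=0$, $z=0$, which are $\mathbb{Z}_{2}\times\mathbb{Z}_{2}$ two‑mode problems contributing $AE\neq BD$, $AF\neq CD$, $BF\neq CE$ together with the six conditions $B|\beta|\neq|E\alpha|$, $D|\alpha|\neq|A\beta|$, $G|\alpha|\neq|A\gamma|$, $C|\gamma|\neq|I\alpha|$, $F|\gamma|\neq|I\beta|$, $H|\beta|\neq|E\gamma|$; these handle the cross monomials $uv,uw,vw$ and the remaining mixed monomials. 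Since every degree‑two monomial in $u,v,w,\lambda$ involves at most two of these variables, no condition beyond those of \eqref{conditions} can arise, and assembling the pieces gives the factored determinant. A small preliminary point to verify carefully is that indeed $p\in\mathfrak{m}^{2}\,\overrightarrow{\mathscr{E}}_{\mathbf{x},\lambda}(\Gamma)$ and that this submodule is intrinsic and stable under the operations building $RT$, so that the passage through Theorem~\ref{teo13} is legitimate.
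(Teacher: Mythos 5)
Your overall strategy is the same as the paper's: pass to the invariant coordinates $u=x^2$, $v=y^2$, $w=z^2$, generate $RT(h,\Gamma)$ via Proposition~\ref{prop14} (the twelve germs you list agree, up to harmless linear combinations, with \eqref{RT1}), reduce by Nakayama's Lemma~\ref{lemanaka} to a linear--algebra statement about quadratic jets, and let the nondegeneracy conditions \eqref{conditions} supply the required nonvanishing; your packaging as $30$ vectors in the $30$--dimensional quotient $\mathfrak{m}^2\overrightarrow{\mathscr{E}}/\mathfrak{m}^3\overrightarrow{\mathscr{E}}$ is, if anything, a tidier version of the paper's $30\times27$ rank computation, and your preliminary observation that the higher--order terms (a)--(c) are exactly $\mathfrak{m}^2$ in the variables $u,v,w,\lambda$ is correct.

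There is, however, a genuine gap in your first reduction. From ``$\mathfrak{m}^2\overrightarrow{\mathscr{E}}$ is stable under $S_j(\cdot)$ and $(d\,\cdot)(X_i)$'' plus the single inclusion $\mathfrak{m}^2\overrightarrow{\mathscr{E}}\subseteq RT(h,\Gamma)$ you conclude $RT(h+tp,\Gamma)=RT(h,\Gamma)$. That argument gives only one inclusion: each generator $S_j(h+tp)=S_jh+tS_jp$ and $(d(h+tp))(X_i)=(dh)(X_i)+t(dp)(X_i)$ lies in $RT(h,\Gamma)$ because $S_jp,\,(dp)(X_i)\in\mathfrak{m}^2\overrightarrow{\mathscr{E}}\subseteq RT(h,\Gamma)$, whence $RT(h+tp,\Gamma)\subseteq RT(h,\Gamma)$; but to get $S_jh=S_j(h+tp)-tS_jp\in RT(h+tp,\Gamma)$ you would need $\mathfrak{m}^2\overrightarrow{\mathscr{E}}\subseteq RT(h+tp,\Gamma)$, which you have not established --- and Theorem~\ref{teo13} requires equality of the restricted tangent spaces for every $t\in[0,1]$. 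The repair is exactly what the paper does: since $p$ lies in $\mathfrak{m}^2$, it enters the generators only through terms of order at least three in $u,v,w,\lambda$, so the Nakayama/determinant computation for $h+tp$ is verbatim the one for $h$ (the paper's remark that one may take $t\varphi\equiv0$ when checking \eqref{RT9}); this gives $\mathfrak{m}^2\overrightarrow{\mathscr{E}}\subseteq RT(h+tp,\Gamma)$ for all $t$, and since the generators of $RT(h+tp,\Gamma)$ and $RT(h,\Gamma)$ coincide modulo $\mathfrak{m}^2\overrightarrow{\mathscr{E}}$, both spaces equal $\mathfrak{m}^2\overrightarrow{\mathscr{E}}+W$ with the same finite--dimensional $W$, as in \eqref{RT7}--\eqref{RT8}. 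With that step inserted your outline goes through; one further bookkeeping caution is that the factors of your $30\times30$ determinant must reproduce the inequalities of \eqref{conditions} exactly (they read $B|\beta|\neq|E\alpha|$, etc., which is not the same as demanding that both $B\beta+E\alpha$ and $B\beta-E\alpha$ be nonzero).
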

\begin{proof}
To perform the proof, we choose to work with $\mathbb{Z}_2\times\mathbb{Z}_2\times\mathbb{Z}_2$--invariant coordinates. For this purpose, we need to find the generators for $RT(h,\mathbb{Z}_2\times\mathbb{Z}_2\times\mathbb{Z}_2).$ In Lemma \ref{lema normal form} we have taken care of the generators for $\mathscr{E}(\mathbb{Z}_2\times\mathbb{Z}_2\times\mathbb{Z}_2)$ and $\overrightarrow{\mathscr{E}}(\mathbb{Z}_2\times\mathbb{Z}_2\times\mathbb{Z}_2).$ Once the generators for $RT(h,\mathbb{Z}_2\times\mathbb{Z}_2\times\mathbb{Z}_2)$ are computed, then by working in invariant coordinates, the action of $\mathbb{Z}_2\times\mathbb{Z}_2\times\mathbb{Z}_2$ is effectively annihilated.
Our purpose is to show that under the assumption of nondegeneracy,
\begin{equation}\label{RT}
RT(h+t\varphi,\mathbb{Z}_2\times\mathbb{Z}_2\times\mathbb{Z}_2)=RT(h,\mathbb{Z}_2\times\mathbb{Z}_2\times\mathbb{Z}_2),~\forall t\in\mathbf{R}.
\end{equation}
Then we will apply Theorem \ref{teo13} to complete the proof.
We start by identifying (working with invariant coordinates), an isomorphism between $\overrightarrow{\mathscr{E}}_{x,y,z,\lambda}(\mathbb{Z}_2\times\mathbb{Z}_2\times\mathbb{Z}_2)$ and $\overrightarrow{\mathscr{E}}_{u,v,w,\lambda}(\mathbb{Z}_2\times\mathbb{Z}_2\times\mathbb{Z}_2),$ where $u=x^2,~v=y^2$ and $w=z^2.$ That is,
$$g(x,y,z,\lambda)=(p(x^2,y^2,z^2,\lambda)x,q(x^2,y^2,z^2,\lambda)y,r(x^2,y^2,z^2,\lambda)z).$$
We write $g$ in the form $[p(u,v,w,\lambda),q(u,v,w,\lambda),r(u,v,w,\lambda)]$ and work in $\overrightarrow{\mathscr{E}}_{u,v,w,\lambda}(\mathbb{Z}_2\times\mathbb{Z}_2\times\mathbb{Z}_2)$ which is a module over $\mathscr{E}_{u,v,w,\lambda}(\mathbb{Z}_2\times\mathbb{Z}_2\times\mathbb{Z}_2).$
A short calculation shows that the nine generators of $\overleftrightarrow{\mathscr{E}}(\mathbb{Z}_2\times\mathbb{Z}_2\times\mathbb{Z}_2)$ are the $3\times3$ matrices $S_k,k=1,\ldots,9,$ each with eight zero entries while the ninth entry is $s_{ij}=1~\mathrm{if}~i=j,$ $s_{ij}=s_{ji}~\mathrm{if}~i\neq j$ and $s_{12}=xy,$ $s_{13}=xz,$ $s_{23}=yz.$ Moreover, one observes that $RT(g,\mathbb{Z}_2\times\mathbb{Z}_2\times\mathbb{Z}_2)$ can be viewed as a submodule of $\overrightarrow{\mathscr{E}}_{u,v,w,\lambda}(\mathbb{Z}_2\times\mathbb{Z}_2\times\mathbb{Z}_2),$ which has the following twelve generators:
\begin{equation}\label{RT1}
\begin{array}{l}
\hspace{0.35cm}[p,0,0],~[0,q,0],~[0,0,r],~[qv,0,0],~[rw,0,0],~[0,pu,0],~[0,rw,0],\\
\\
~[0,0,pu],~[0,0,qv],~[up_u,uq_u,ur_u],~[vp_v,vq_v,vr_v],~[wp_w,wq_w,wr_w].
\end{array}
\end{equation}
We need to show that
\begin{equation}\label{RT2}
\mathscr{M}^2_{u,v,w,\lambda}\overrightarrow{\mathscr{E}}_{u,v,w,\lambda}\subset RT(h+t\varphi,\mathbb{Z}_2\times\mathbb{Z}_2\times\mathbb{Z}_2).
\end{equation}
In order to do this, let $\mathscr{I}\subset RT(g,\mathbb{Z}_2\times\mathbb{Z}_2\times\mathbb{Z}_2)$ be the submodule with the twenty--seven generators
\begin{equation}\label{RT3}
\begin{array}{l}
\hspace{0.1cm}\nu[p,0,0],~\nu[0,q,0],~\nu[0,0,r],~\nu[qv,0,0],~\nu[rw,0,0],~\nu[0,pu,0],~\nu[0,rw,0],\\
\\
~\nu[0,0,pu],~\nu[0,0,qv],~\nu[up_u,uq_u,ur_u],~\nu[vp_v,vq_v,vr_v],~\nu[wp_w,wq_w,wr_w]
\end{array}
\end{equation}
where $\nu=u,v,w$ or $\lambda$ and $g=h+t\varphi.$ We claim that
\begin{equation}\label{RT4}
\mathscr{M}^2_{u,v,w,\lambda}\overrightarrow{\mathscr{E}}_{u,v,w,\lambda}=\mathscr{I}.
\end{equation}
If \eqref{RT4} is true then \eqref{RT2} is also true. In addition, if \eqref{RT4} is true then
\begin{equation}\label{RT5}
(a)\hspace{0.4cm}RT(h+t\varphi,\mathbb{Z}_2\times\mathbb{Z}_2\times\mathbb{Z}_2)=\mathscr{M}^2_{u,v,w,\lambda}\overrightarrow{\mathscr{E}}_{u,v,w,\lambda}+W,
\end{equation}
where
\begin{equation}\label{RT6}
(b)\hspace{0.4cm}W=\mathbf{R}\{[p,0,0],[0,q,0],[0,0,r],u[p_u,q_u,r_u],v[p_v,q_v,r_v],w[p_w,q_w,r_w]\}.
\end{equation}
We compute now the elements composing the basis of $W$ modulo terms in $\mathscr{I}=\mathscr{M}^2_{u,v,w,\lambda}\overrightarrow{\mathscr{E}}_{u,v,w,\lambda},$ that is, the terms that are quadratic in $u,v,w,\lambda.$
\begin{equation}\label{RT7}
\begin{array}{l}
(a)\hspace{0.4cm}[p,0,0]\equiv[Au+Bv+Cw+\alpha\lambda,0,0]~~~(\mathrm{mod}\mathscr{I})\\
\\
(b)\hspace{0.4cm}[0,q,0]\equiv[0,Cu+Dv+Fw+\beta\lambda,0]~~~(\mathrm{mod}\mathscr{I})\\
\\
(c)\hspace{0.4cm}[0,0,r]\equiv[0,0,Gu+Hv+Iw+\gamma\lambda]~~~(\mathrm{mod}\mathscr{I})\\
\\
(d)\hspace{0.4cm}u[p_u,q_u,r_u]\equiv[Au,Cu,Gu]~~~(\mathrm{mod}\mathscr{I})\\
\\
(e)\hspace{0.4cm}v[p_v,q_v,r_v]\equiv[Bv,Dv,Hv]~~~(\mathrm{mod}\mathscr{I})\\
\\
(f)\hspace{0.4cm}w[p_w,q_w,r_w]\equiv[Cw,Fw,Iw]~~~(\mathrm{mod}\mathscr{I}).
\end{array}
\end{equation}
From \eqref{RT6} and \eqref{RT7} it follows that
\begin{equation}\label{RT8}
\begin{array}{l}
RT(h+t\varphi,\mathbb{Z}_2\times\mathbb{Z}_2\times\mathbb{Z}_2)=\mathscr{M}^2_{u,v,w,\lambda}
\overrightarrow{\mathscr{E}}_{u,v,w,\lambda}\oplus\mathbf{R}\left\{[Au+Bv+Cw+\alpha\lambda,0,0],\right.\\
\\
\hspace{4.4cm}[0,Cu+Dv+Fw+\beta\lambda,0],~[0,0,Gu+Hv+Iw+\gamma\lambda],\\
\\
\hspace{4.4cm}\left.[Au,Cu,Gu],~[Bv,Dv,Hv],~[Cw,Fw,Iw]\right\}.
\end{array}
\end{equation}
From \eqref{RT8} we conclude that $R(h+t\varphi,\mathbb{Z}_2\times\mathbb{Z}_2\times\mathbb{Z}_2)$ is independent of $t\varphi,$ determining \eqref{RT}. But the proof of \eqref{RT} is not complete yet. To achieve it, we have to determine \eqref{RT4}. For this purpose, we will make use of the nondegeneracy of $h.$ We know that all the generators of $\mathscr{I}$ in \eqref{RT3} are composed of quadratic or higher order terms in $u,~v,~w$ or $\lambda.$ Therefore, the following inclusion happens:
$$\mathscr{I}\subset\mathscr{M}^2_{u,v,w,\lambda}\overrightarrow{\mathscr{E}}_{u,v,w,\lambda}.$$
To prove \eqref{RT4} we must show that the inverse inclusion is also true, i.e.
$$\mathscr{M}^2_{u,v,w,\lambda}\overrightarrow{\mathscr{E}}_{u,v,w,\lambda}\subset\mathscr{I}.$$
From Nakayama's Lemma \ref{lemanaka}, the above inclusion is true provided
\begin{equation}\label{RT9}
\mathscr{M}^2_{u,v,w,\lambda}\overrightarrow{\mathscr{E}}_{u,v,w,\lambda}\subset\mathscr{I}+
\mathscr{M}^3_{u,v,w,\lambda}\overrightarrow{\mathscr{E}}_{u,v,w,\lambda}.
\end{equation}
By inspection we see that $t\varphi$ consists of terms of quadratic or higher order in $u,~v,~w$ or $\lambda.$ Hence, $t\varphi$ enters the generators of $\mathscr{I}$ in \eqref{RT3} only through cubic or higher order terms in $u,~v,~w$ or $\lambda.$ Therefore, when checking \eqref{RT9} we can assume $t\varphi\equiv0.$
For the rest of the proof we consider the thirty generators of the module $\mathscr{M}^2_{u,v,w,\lambda}\overrightarrow{\mathscr{E}}_{u,v,w,\lambda};$ they are of the form
\begin{equation}\label{RT10}
[i,0,0],~[0,i,0],~[0,0,i],~\mathrm{where}~i=\{u^2,~v^2,~w^2,~\lambda^2,~uv,~uw,~u\lambda,~vw,~v\lambda,~w\lambda\}.
\end{equation}
Moreover, we want to express these thirty generators of $\mathscr{M}^2_{u,v,w,\lambda}\overrightarrow{\mathscr{E}}_{u,v,w,\lambda}$ in terms of the twenty--seven generators of $\mathscr{I}$ in \eqref{RT3}. Since $t\varphi\equiv0,$ we can write
\begin{equation}\label{RT11}
\begin{array}{l}
(a)\hspace{1cm}p=Au+Bv+Cw+\alpha\lambda,\\
\\
(b)\hspace{1cm}q=Du+Ev+Fw+\beta\lambda,\\
\\
(c)\hspace{1cm}r=Gu+Hv+Iw+\gamma\lambda.
\end{array}
\end{equation}
This yields a $30\times27$ matrix, which we call $M$. The idea is that if we show that the rank of this matrix is $27,$ then using basic algebra we can affirm that each generator of $\mathscr{I}$ in \eqref{RT3} can be written in terms of the generators in \eqref{RT10}, hence \eqref{RT9} will follow. Now the size of the matrix $M$ makes its explicit form impossible to be written in this paper. However, based on the nondegeneracy conditions \eqref{conditions}, we will show that certain number of columns/rows can be removed, so in the end we will have showed that the rank of this matrix is $27,$ proving \eqref{RT9}. We begin by taking into account the nondegeneracy conditions $\alpha\neq0,~\beta\neq0,~\gamma\neq0,~AE\neq BD,~BF\neq CE,~AF\neq CD.$ This way we can remove from the matrix M the $12$ columns $\lambda[p,0,0],~\lambda[0,q,0],~\lambda[0,0,r],~i[p_u,q_u,r_u],~i[p_v,q_v,r_v],~i[p_w,q_w,r_w]$ where $i=uv,~uw,~vw,$ and $12$ rows $[\lambda^2,0,0],~[0,\lambda^2,0],~[0,0,\lambda^2],~[i,0,0],~[0,i,0],~[0,0,i]$ where $i=uv,~uw,~vw.$ This way we obtain a $18\times15$ matrix whose rank is precisely $15.$
\\
Next we use nondegeneracy assumptions $B|\beta|\neq|E\alpha|,$ $D|\alpha|\neq|A\beta|,$ $G|\alpha|\neq|A\gamma|,$
$C|\gamma|\neq|I\alpha|,$ $F|\gamma|\neq|I\beta|,$ $H|\beta|\neq|E\gamma|,$ to remove the $6$ rows $[w^2,0,0],$ $[0,v^2,0],$ $[0,0,u^2],$ $[\lambda w,0,0],$ $[0,\lambda v,0],$ $[0,0,\lambda u]$ and the $6$ columns $w[p,0,0],$ $v[0,q,0],$ $u[0,0,r],$ $[wr,0,0],$ $[0,vq,0],$ $[0,0,up].$ This yields a $9\times6$ matrix; we finally use the remaining nondegeneracy conditions $A\neq0,~E\neq0,~I\neq0$ (see \eqref{conditions}) to show that this matrix does have rank $6.$ Therefore, the original matrix $M$ has rank $27,$ and we have proved \eqref{RT9}.
\end{proof}

\begin{priteo}\label{teoprinc}
Let $g:\mathbf{R}^3\times\mathbf{R}\rightarrow\mathbf{R}^3$ be a bifurcation problem in three state variables commuting with the group $\mathbb{Z}_2\times\mathbb{Z}_2\times\mathbb{Z}_2$ and satisfying the nondegeneracy conditions \eqref{conditions}. Then $g$ is $\mathbb{Z}_2\times\mathbb{Z}_2\times\mathbb{Z}_2$--equivalent to
\begin{equation}\label{propnon}
\begin{array}{l}
h(x,y,z,\lambda)=(\varepsilon_1x^3+m_1xy^2+n_1xz^2+\varepsilon_2\lambda x,\varepsilon_3y^3+m_2yx^2+n_2yz^2+\varepsilon_4\lambda y,\varepsilon_5z^3+m_3zx^2+n_3zy^2+\varepsilon_6\lambda z)
\end{array}
\end{equation}
where
\begin{equation}\label{propnon1}
\begin{array}{l}
\varepsilon_1=\mathrm{sgn}(A),~\varepsilon_3=\mathrm{sgn}(E),~\varepsilon_5=\mathrm{sgn}(I),~\varepsilon_2=\mathrm{sgn}(\alpha),~\varepsilon_4=\mathrm{sgn}(\beta),~\varepsilon_6=\mathrm{sgn}(\gamma),\\
\\
\hspace{0.5cm}m_1=\displaystyle{\frac{B|\beta|}{|E\alpha|}},~m_2=\displaystyle{\frac{D|\alpha|}{|A\beta|}},
~m_3=\displaystyle{\frac{G|\alpha|}{|A\gamma|}},~
n_1=\displaystyle{\frac{C|\gamma|}{|I\alpha|}},~n_2=\displaystyle{\frac{F|\gamma|}{|I\beta|}},
~n_3=\displaystyle{\frac{H|\beta|}{|E\gamma|}}.
\end{array}
\end{equation}
Moreover,
\begin{equation}\label{nondegenr}
\begin{array}{l}
m_1\neq\varepsilon_2\varepsilon_3\varepsilon_4,~m_2\neq\varepsilon_1\varepsilon_2\varepsilon_4,
~m_3\neq\varepsilon_1\varepsilon_2\varepsilon_6,~n_1\neq\varepsilon_2\varepsilon_5\varepsilon_6,\\
n_2\neq\varepsilon_4\varepsilon_5\varepsilon_6,~n_3\neq\varepsilon_3\varepsilon_4\varepsilon_5,~
m_1m_2\neq\varepsilon_1\varepsilon_3,~m_1n_2\neq\varepsilon_3n_1.
\end{array}
\end{equation}
\end{priteo}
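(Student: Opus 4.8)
The strategy is to combine Theorem~\ref{teo42} with an explicit scaling argument. Theorem~\ref{teo42} tells us that a nondegenerate $\mathbb{Z}_2\times\mathbb{Z}_2\times\mathbb{Z}_2$--equivariant bifurcation problem $g$ is strongly $\mathbb{Z}_2\times\mathbb{Z}_2\times\mathbb{Z}_2$--equivalent to its third--order truncation $k$ as written in \eqref{lowestb}. So it suffices to show that $k$ itself is $\mathbb{Z}_2\times\mathbb{Z}_2\times\mathbb{Z}_2$--equivalent to the normal form $h$ of \eqref{propnon} with the coefficients \eqref{propnon1}. Since $k$ is already polynomial and homogeneous in the appropriate sense, the plan is to produce the equivalence purely from a diagonal linear change of coordinates $Z(x,y,z,\lambda)=(ax,by,cz)$ together with a diagonal constant conjugating matrix $S=\mathrm{diag}(d_1,d_5,d_9)$ and a linear rescaling $\Lambda(\lambda)=e\lambda$, all of the form allowed by \eqref{matrixS1}, \eqref{matrixS1ty} and subject to the positivity constraints \eqref{shall}. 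Substituting this ansatz into $g(x,y,z,\lambda)=S\,h(Z(x,y,z,\lambda),\Lambda(\lambda))$ and matching monomial coefficients componentwise reduces the problem to a system of scalar equations in $a,b,c,d_1,d_5,d_9,e$.

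Concretely, matching the first component gives $A = d_1\varepsilon_1 a^2$, $B=d_1 m_1 b^2$, $C=d_1 n_1 c^2$, $\alpha = d_1\varepsilon_2 e$, and similarly in the second and third components $E=d_5\varepsilon_3 b^2$, $D=d_5 m_2 a^2$, $F=d_5 n_2 c^2$, $\beta=d_5\varepsilon_4 e$, and $I=d_9\varepsilon_5 c^2$, $G=d_9 m_3 a^2$, $H=d_9 n_3 b^2$, $\gamma=d_9\varepsilon_6 e$. The first three conditions $A\neq0$, $E\neq0$, $I\neq0$ force the diagonal cubic coefficients to have the sign $\varepsilon_1=\mathrm{sgn}(A)$, $\varepsilon_3=\mathrm{sgn}(E)$, $\varepsilon_5=\mathrm{sgn}(I)$, which pins down $a^2=|A|/d_1$, $b^2=|E|/d_5$, $c^2=|I|/d_9$ once $d_1,d_5,d_9$ are chosen positive; likewise $\alpha,\beta,\gamma\neq0$ and $\Lambda'(0)=e>0$ force $\varepsilon_2=\mathrm{sgn}(\alpha)$, $\varepsilon_4=\mathrm{sgn}(\beta)$, $\varepsilon_6=\mathrm{sgn}(\gamma)$ and determine the products $d_1 e=|\alpha|$, $d_5 e=|\beta|$, $d_9 e=|\gamma|$. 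One then solves: take, say, $d_5=1$, whence $e=|\beta|$, $d_1=|\alpha|/|\beta|$, $d_9=|\gamma|/|\beta|$, and $a^2,b^2,c^2$ follow. Substituting back into the cross--term relations yields $m_1 = B/(d_1 b^2) = B|\beta|\,d_5/(|\alpha|\,|E|) = B|\beta|/|E\alpha|$, and in exactly the same way $m_2=D|\alpha|/|A\beta|$, $m_3=G|\alpha|/|A\gamma|$, $n_1=C|\gamma|/|I\alpha|$, $n_2=F|\gamma|/|I\beta|$, $n_3=H|\beta|/|E\gamma|$, reproducing \eqref{propnon1}. All the chosen scalars $a,b,c,d_1,d_5,d_9,e$ are real and positive by construction, so \eqref{shall} and $\Lambda'(0)>0$ hold, and the equivalence genuinely preserves the $\mathbb{Z}_2\times\mathbb{Z}_2\times\mathbb{Z}_2$ symmetry since $Z$ and $S$ are diagonal.

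Finally, the nondegeneracy inequalities \eqref{nondegenr} are obtained by translating the nondegeneracy conditions \eqref{conditions} on $g$ through these formulas: e.g. $B|\beta|\neq|E\alpha|$ becomes $m_1\neq1$ in modulus, which in the presence of the sign bookkeeping reads $m_1\neq\varepsilon_2\varepsilon_3\varepsilon_4$; the conditions $D|\alpha|\neq|A\beta|$, $G|\alpha|\neq|A\gamma|$, $C|\gamma|\neq|I\alpha|$, $F|\gamma|\neq|I\beta|$, $H|\beta|\neq|E\gamma|$ give the next five inequalities in \eqref{nondegenr} analogously; and the relations $AE\neq BD$, $BF\neq CE$ translate (after dividing by the appropriate positive quantities) into $m_1 m_2\neq\varepsilon_1\varepsilon_3$ and $m_1 n_2\neq\varepsilon_3 n_1$. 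I expect the only delicate point to be the sign bookkeeping: one has to be careful that, since only $a^2,b^2,c^2$ (not $a,b,c$) are determined, the signs $\varepsilon_i$ appearing in \eqref{propnon} are forced by $\mathrm{sgn}(A),\mathrm{sgn}(E),\mathrm{sgn}(I),\mathrm{sgn}(\alpha),\mathrm{sgn}(\beta),\mathrm{sgn}(\gamma)$ and cannot be absorbed, so the products of $\varepsilon_i$ in \eqref{nondegenr} must be kept track of exactly when rewriting the absolute--value inequalities of \eqref{conditions}; everything else is a bounded, routine computation.
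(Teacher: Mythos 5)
Your proposal is correct and follows essentially the same route as the paper: a diagonal linear $\mathbb{Z}_2\times\mathbb{Z}_2\times\mathbb{Z}_2$--equivalence ($Z$ diagonal, $S$ constant diagonal, $\Lambda$ a positive rescaling) whose coefficient matching on the cubic truncation $k$ forces the signs $\varepsilon_i$ and the modal parameters of \eqref{propnon1}, with Theorem \ref{teo42} disposing of the higher--order terms. The only quibble is a harmless bookkeeping slip: with your ansatz $g=S\,h(Z,\Lambda)$ the matching equations should carry an extra factor of $a$, $b$, $c$ in the respective components (e.g.\ $A=d_1\varepsilon_1a^3$, $B=d_1m_1ab^2$, $\alpha=d_1\varepsilon_2ea$), but this amounts to replacing $d_1,d_5,d_9$ by $d_1a,d_5b,d_9c$ and cancels in the ratios, so your formulas for $m_i,n_i$ and the translation of \eqref{conditions} into \eqref{nondegenr} are unaffected.
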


\begin{nota}
\begin{enumerate}
\item{} The normal form $h$ in \eqref{propnon} depends on the six parameters $m_i,n_i,i=1,2,3$ satisfying the nondegeneracy conditions \eqref{nondegenr}. These are the six modal parameters promised at the very beginning of the Subsection \ref{subsecequiv}.

\item{} The proof of Theorem \ref{teoprinc} divides into two parts. In the first part, we use the linear $\mathbb{Z}_2\times\mathbb{Z}_2\times\mathbb{Z}_2$--equivalences to transform $k$ to the normal form $h.$ In the second part, we show that the higher--order terms can be annihilated by a nonlinear $\mathbb{Z}_2\times\mathbb{Z}_2\times\mathbb{Z}_2$--equivalence. This second part actually consists entirely on the proof of Theorem \ref{teo42}.
\end{enumerate}
\end{nota}

\begin{proof}
The most general linear $\mathbb{Z}_2\times\mathbb{Z}_2\times\mathbb{Z}_2$--equivalence is given by
\begin{equation*}
Z(x,y,z,\lambda)=(ax,by,cz),~~
\Lambda(\lambda)=\sigma\lambda,~~
S(x,y,z,\lambda)=\begin{bmatrix}d&0&0\\0&e&0\\0&0&f\end{bmatrix},
\end{equation*}
where $a,~b,~c,~d,~e$ and $f$ are positive constants. Letting this equivalence act on $k(x,y,z,\lambda),$ which is given by \eqref{lowestb}, we find
\begin{equation}\label{nn}
\begin{bmatrix}d&0&0\\0&e&0\\0&0&f\end{bmatrix}k(ax,by,cz,\sigma\lambda)=
\begin{bmatrix}Ada^3x^3+Bdab^2xy^2+Cdac^2xz^2+d\alpha\lambda\sigma ax\\
                Dea^2byx^2+Eeb^3y^3+Febc^2yz^2+e\beta\lambda\sigma by\\
                Gfa^2czx^2+Hfb^2czy^2+Ifc^3z^3+f\gamma\lambda\sigma cz\end{bmatrix}.
\end{equation}
To obtain the normal form \eqref{propnon} we need
\begin{equation}\label{solnnn}
\begin{array}{l}
|A|da^3=1,~da\sigma|\alpha|=1,~|E|eb^3=1,~eb\sigma|\beta|=1,~|I|fc^3=1,~fc\sigma|\gamma|=1.
\end{array}
\end{equation}

We solve equations \eqref{solnnn} to obtain

\begin{equation}\label{solnnngf}
\begin{array}{l}
\displaystyle{d=\frac{1}{a^3|A|}},~\displaystyle{e=\frac{1}{b^3|E|}},~
\displaystyle{f=\frac{1}{c^3|I|}},~\displaystyle{\sigma=\frac{a^2|A|}{|\alpha|}},~
\displaystyle{\frac{a}{b}=\sqrt{\left|\frac{E\alpha}{A\beta}\right|}},~
\displaystyle{\frac{a}{c}=\sqrt{\left|\frac{I\alpha}{C\gamma}\right|}},~
\displaystyle{\frac{b}{c}=\sqrt{\left|\frac{I\beta}{E\gamma}\right|}}.
\end{array}
\end{equation}
Substitution of \eqref{solnnngf} into the right--hand side of \eqref{nn} yields the normal form \eqref{propnon} with $m_1,~m_2,~m_3,~n_1,~n_2$ and $n_3$ given in \eqref{propnon1}. Then we use Theorem \ref{teo42} to complete the proof.
\end{proof}

The preceding analysis of the $\mathbb{Z}_2\times\mathbb{Z}_2\times\mathbb{Z}_2$--equivariant bifurcation problem yields an example,
namely, following form of equation \eqref{propnon1}

\begin{equation}\label{propnon2}
G(x,y,z,\lambda)=
\left(
\begin{array}{l}
\varepsilon_1x^3+m_1xy^2+n_1xz^2-\lambda x\\
\\
\varepsilon_3y^3+m_2yx^2+n_2yz^2-\lambda y\\
\\
\varepsilon_5z^3+m_3zx^2+n_3zy^2-\lambda z
\end{array}
\right).
\end{equation}
We will state the next theorem whose proof is identical to the proof of Theorem 6.8 of \cite{GS79}.
\begin{priteo}\label{teominus1}
Let $H(x,y,z,\lambda)$ be a bifurcation problem with symmetry group $\Gamma=\mathbb{Z}_2\times\mathbb{Z}_2\times\mathbb{Z}_2.$ Suppose that $H$ is a small perturbation of a non-degenerate problem \eqref{propnon2}
with modal parameters $m_{1_0},m_{2_0},m_{3_0},n_{1_0},n_{2_0},n_{3_0}.$ Then $H$ is $\Gamma$--equivalent to
\begin{equation}\label{propnon3}
F(x,y,z,\lambda)=
\left(
\begin{array}{l}
\varepsilon_1x^3+m_1xy^2+n_1xz^2-\lambda x\\
\\
\varepsilon_3y^3+m_2yx^2+n_2yz^2-(\varepsilon_2+\lambda) y\\
\\
\varepsilon_5z^3+m_3zx^2+n_3zy^2-(\varepsilon_2+\varepsilon_6+\lambda )z
\end{array}
\right),
\end{equation}
where $(m_1,m_2,m_3,n_1,n_2,n_3,\lambda)$ is near $(m_{1_0},m_{2_0},m_{3_0},n_{1_0},n_{2_0},n_{3_0},0).$
\end{priteo}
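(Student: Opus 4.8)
The plan is to recognize $F$ in \eqref{propnon3} as a $\Gamma$-universal unfolding of the organizing centre $G$ in \eqref{propnon2} and to conclude with the equivariant universal unfolding theorem; the argument is the one used in the proof of Theorem 6.8 of \cite{GS79}. Note first that $G$ is already in the normal form \eqref{propnon} (with $\varepsilon_2=\varepsilon_4=\varepsilon_6=-1$) and is non-degenerate in the sense of \eqref{conditions}, so that Theorem \ref{teoprinc} and its proof (which passes through Theorem \ref{teo42}) solve the recognition problem for $G$ and, in particular, provide the restricted tangent space $RT(G,\Gamma)$ that we shall use. It then remains only to show that $F$ is a $\Gamma$-universal unfolding of $G$: once that is established, the equivariant universal unfolding theorem (\cite{GS85}, Chapter XIV) implies that every small $\Gamma$-equivariant perturbation of $G$ --- in particular $H$ --- is $\Gamma$-equivalent to $F$ evaluated at some value of its parameters, and by continuity that value lies near the one at the centre, so that $(m_1,\dots,n_3,\lambda)$ is near $(m_{1_0},\dots,n_{3_0},0)$, which is the assertion.

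To verify $\Gamma$-universality I would work in the invariant coordinates $u=x^2$, $v=y^2$, $w=z^2$, write $G=[p,q,r]$ as in the proof of Theorem \ref{teo42}, and use the standard criterion: an $8$-parameter $\Gamma$-unfolding of $G$ is $\Gamma$-versal --- and, since it has exactly as many parameters as the $\Gamma$-codimension of $G$, in fact $\Gamma$-universal --- provided the eight derivatives of the unfolding with respect to its parameters, evaluated at the centre, project to a basis of $\overrightarrow{\mathscr{E}}_{x,y,z,\lambda}(\Gamma)/T(G,\Gamma)$, where $T(G,\Gamma)=RT(G,\Gamma)+\mathbf{R}\{G_\lambda\}$ is the full $\Gamma$-equivariant tangent space. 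The space $RT(G,\Gamma)$ is given by \eqref{RT8}: it equals $\mathscr{M}^2_{u,v,w,\lambda}\overrightarrow{\mathscr{E}}_{u,v,w,\lambda}$ plus the six-dimensional space spanned by $[p,0,0]$, $[0,q,0]$, $[0,0,r]$, $u[p_u,q_u,r_u]$, $v[p_v,q_v,r_v]$, $w[p_w,q_w,r_w]$. Adjoining $G_\lambda=(-x,-y,-z)$ and using only $\varepsilon_i\neq0$, one checks that $T(G,\Gamma)$ has $\Gamma$-codimension exactly $8$, with a complement spanned by the classes of $(xy^2,0,0)$, $(0,x^2y,0)$, $(0,0,x^2z)$, $(xz^2,0,0)$, $(0,yz^2,0)$, $(0,0,y^2z)$, $(0,y,0)$, $(0,0,z)$. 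These are precisely the derivatives of $F$ with respect to its six modal parameters $m_1,m_2,m_3,n_1,n_2,n_3$ and to its two remaining small parameters --- the latter appearing in the linear part as the constants added to $\lambda$ in the second and third components of $F$ --- so $F$ is indeed a $\Gamma$-universal unfolding of $G$. Since six of the eight parameters are genuine moduli (invariants of $\Gamma$-equivalence that cannot be rescaled to fixed values), they persist in the model $F$ as nearby quantities rather than being normalized away; accommodating this is exactly the ``with moduli'' refinement carried out in the proof of Theorem 6.8 of \cite{GS79}, which is why the conclusion keeps $m_i,n_i$ near, rather than at, their centre values.

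The hard part is not computational --- the heavy tangent-space bookkeeping was already done for Theorem \ref{teo42} --- but lies in running the universal unfolding theorem in the form that allows moduli and in verifying the versality condition, i.e. that the eight explicit directions of $F$ are complementary to $T(G,\Gamma)$. Concretely this amounts to checking that the matrix appearing in the proof of Theorem \ref{teo42}, enlarged by the row and column contributed by $G_\lambda$, still has full rank under the nondegeneracy conditions \eqref{conditions}--\eqref{nondegenr}; everything else is a transcription of the scheme of \cite{GS79}.
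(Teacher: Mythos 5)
Your proposal follows exactly the route the paper intends: the paper offers no argument beyond declaring the proof identical to that of Theorem 6.8 of \cite{GS79}, i.e., precisely the equivariant universal-unfolding-with-moduli argument you spell out. Your supporting details (codimension eight obtained from the restricted tangent space \eqref{RT8} together with $G_\lambda$, and the eight complementary directions matching the parameter derivatives of $F$) are consistent with the computations already done for Theorem \ref{teo42}, so this is essentially the same approach, only made more explicit.
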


The qualitative bifurcation diagrams illustrating four mode jumping possibilities are shown in Figure \ref{bif}. To explain the derivation of the Figure \ref{bif}, remark that setting \eqref{propnon3} equal to zero yields the equations
\begin{equation}\label{propnon4}
\begin{array}{l}
(a)~~x=0;~y=0;~z=0,\\
\\
(b)~~x^2=\displaystyle{\frac{\lambda}{\varepsilon_1}};~y=0;~z=0,\\
\\
(c)~~x=0;~y^2=\displaystyle{\frac{\varepsilon_2+\lambda}{\varepsilon_3}};~z=0,\\
\\
(d)~~x=0;~y=0;~z^2=\displaystyle{\frac{\varepsilon_2+\varepsilon_6+\lambda}{\varepsilon_5}},\\
\\
(e)~~\varepsilon_1x^2+m_1y^2=\lambda;~\varepsilon_3y^2+m_2x^2=\varepsilon_2+\lambda;~z=0,\\
\\
(f)~~\varepsilon_1x^2+n_1z^2=\lambda;~y=0;~\varepsilon_5z^2+m_3x^2=\varepsilon_2+\varepsilon_6+\lambda,\\
\\
(g)~~x=0;~\varepsilon_3y^2+n_2z^2=\varepsilon_2+\lambda;~\varepsilon_5z^2+n_3y^2=\varepsilon_2+\varepsilon_6+\lambda,\\
\\
(h)~~\left\{
\begin{array}{l}
\varepsilon_1x^2+m_1y^2+n_1z^2=\lambda;\\
\varepsilon_3y^2+m_2x^2+n_2z^2=\varepsilon_2+\lambda;\\
\varepsilon_5z^2+m_3x^2+n_3y^2=\varepsilon_2+\varepsilon_6+\lambda.
\end{array}
\right.
\end{array}
\end{equation}
\begin{figure}[h]
\centering
\begin{center}
\includegraphics[scale=0.2]{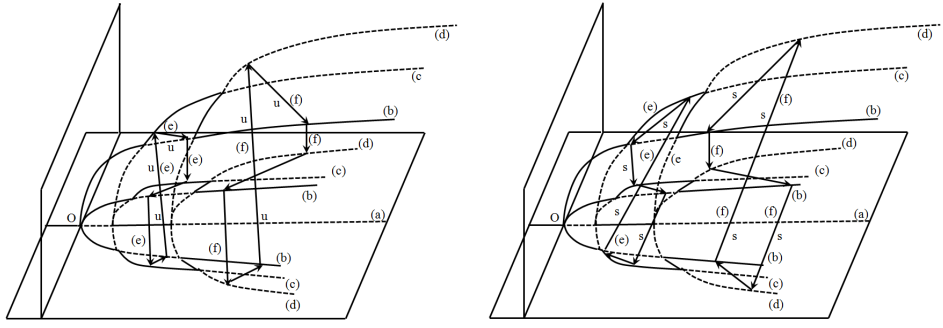}
\caption{Schematic representation of a structurally stable/unstable heteroclinic cycles derived from singularity results of the bifurcation problem with $\mathbb{Z}_2\times\mathbb{Z}_2\times\mathbb{Z}_2$ symmetry. Arrows are used to better show the jumps; they have no direction meanings. Solid lines represent stable the branches while dotted lines, the unstable ones. Parameters: $n_2,\varepsilon_2,\varepsilon_6<0,$ $n_3,\varepsilon_1,\varepsilon_3,\varepsilon_4,\varepsilon_5>0,$ $n_3<\varepsilon_3,$ $\varepsilon_3+\varepsilon_6>n_3$ and $m_2,m_3<1.$ Left figure: $\varepsilon_3\varepsilon_5+n_2n_3<0,m_1m_2<\varepsilon_1\varepsilon_3,n_1m_3<\varepsilon_1\varepsilon_5.$ Right figure: $\varepsilon_3\varepsilon_5+n_2n_3>0,m_1m_2>\varepsilon_1\varepsilon_3,n_1m_3>\varepsilon_1\varepsilon_5.$}\label{bif}
\end{center}
\end{figure}
The first seven equations in \eqref{propnon4} have real solutions; the last one can have periodic solutions. To avoid too complicated bifurcation diagrams we resume the possibility of jumping to the cases $(a)-(f).$ Computation of the conditions imposed in the parameter space in the caption of Figure \ref{bif} are easily obtained from the seven equation \eqref{propnon4} and their explicit derivation is left as an exercise to the reader. It is important to remark that when
\begin{equation*}\label{param6}
n_2,\varepsilon_2,\varepsilon_6<0,~n_3,\varepsilon_1,\varepsilon_3,\varepsilon_4,\varepsilon_5>0,~n_3<\varepsilon_3,~
\varepsilon_3+\varepsilon_6>n_3~m_2,m_3<1,
\end{equation*}
a quasi-static variation of $\lambda$ produces a smooth transition between the bifurcating branches in Figure \ref{bif}, right, and a necessity for jumping between the branches in Figure \ref{bif}, left.
\subsection{Linearized stability and $\mathbb{Z}_2\times\mathbb{Z}_2\times\mathbb{Z}_2$ symmetry}
Let $g:\mathbf{R}^3\times\mathbf{R}\rightarrow\mathbf{R}^3$ be a bifurcation problem commuting with the group $\mathbb{Z}_2\times\mathbb{Z}_2\times\mathbb{Z}_2.$ We call a solution $(x,y,z,\lambda)$ of the equation $g(x,y,z,\lambda)=0$ linearly stable if all three eigenvalues of $dg$ at $(x,y,z,\lambda)$ have positive linear part; unstable if at least one of them has negative real part. We begin by calculating the eigenvalues of $dg.$\\
Since $g$ has the form of equation \eqref{eq lema normal form} which we recall here
\begin{equation}\label{eq lema normal form1}
\begin{array}{l}
\hspace{1cm}g(x,y,z,\lambda)=(p(u,v,w,\lambda)x,~q(u,v,w,\lambda)y,~~r(u,u,w,\lambda)z)~~\mathrm{where}\\
\\
u=x^2,~~v=y^2,~~w=z^2,~~p(0,0,0,0)=0,~~~q(0,0,0,0)=0,~~~r(0,0,0,0)=0.
\end{array}
\end{equation}
The Jacobian matrix is then

\begin{equation}\label{equation dg}
dg=
\begin{array}{l}
\begin{bmatrix}
p+2up_u&2p_vxy&2p_wxz\\
2q_uxy&q+2vq_v&2q_wyz\\
2r_uxz&2r_vyz&r+2wr_w
\end{bmatrix}.
\end{array}
\end{equation}
Let $(x,y,z,\lambda)$ be a solution to $g=0.$ We find the following mode solutions:
\begin{itemize}
\vspace{0.25cm}
    \item [(a)] Trivial solution: when $x=y=z=0;$
    \item [(b)] $x$--mode solution: $p(x^2,0,0,\lambda)=0,~y=z=0,x\neq0;$
    \item [(c)] $y$--mode solution: $q(0,y^2,0,\lambda)=0,~x=z=0,y\neq0;$
    \item [(d)] $z$--mode solution: $r(0,0,z^2,\lambda)=0,~x=y=0,z\neq0;$
    \item [(e)] $xy$--mode solution: $p(x^2,y^2,0,\lambda)=q(x^2,y^2,0,\lambda)=0,~z=0,~x\neq0,~y\neq0;$
    \item [(f)] $xz$--mode solution: $p(x^2,0,z^2,\lambda)=r(x^2,0,z^2,\lambda)=0,~y=0,~x\neq0,~z\neq0;$
    \item [(g)] $yz$--mode solution: $q(0,y^2,z^2,\lambda)=r(0,y^2,z^2,\lambda)=0,~x=0,~y\neq0,~z\neq0;$
    \item [(h)] $xyz$--mode solution: $p(x^2,y^2,z^2,\lambda)=q(x^2,y^2,z^2,\lambda)=r(x^2,y^2,z^2,\lambda)=0,~x\neq0,~y\neq0,~z\neq0;$
\end{itemize}
To analyze the stability of these solutions we need the explicit form of the eigenvalues of the Jacobian matrix \eqref{equation dg}. We have
    \begin{itemize}
    \vspace{0.25cm}
    \item [(a)] Trivial solution: when $x=y=z=0$ with eigenvalues: $p,q,r.$
    \item [(b)] $x$--mode solution: $p(x^2,0,0,\lambda)=0,~y=z=0,x\neq0$ with eigenvalues: $2up_u,q,r.$
    \item [(c)] $y$--mode solution: $q(0,y^2,0,\lambda)=0,~x=z=0,y\neq0$ with eigenvalues: $p,2vq_v,r.$
\item [(d)] $z$--mode solution: $r(0,0,z^2,\lambda)=0,~x=y=0,z\neq0$ with eigenvalues: $p,$ $q,$ $2wr_w.$
\item [(e)] $xy$--mode solution: $p(x^2,y^2,0,\lambda)=q(x^2,y^2,0,\lambda)=0,~z=0,~x\neq0,~y\neq0$ with eigenvalues:
$r,vq_v+up_u\pm\sqrt{u^2p_u^2-2up_uvq_v+v^2q_v^2+4p_vx^2y^2q_u}.$
    \item [(f)] $xz$--mode solution: $p(x^2,0,z^2,\lambda)=r(x^2,0,z^2,\lambda)=0,~y=0,~x\neq0,~z\neq0$ with eigenvalues:
$q,vq_v+up_u\pm\sqrt{w^2r_w^2-2up_uwr_w+u^2p_u^2+4r_ux^2z^2p_w}.$
    \item [(g)] $yz$--mode solution: $q(0,y^2,z^2,\lambda)=r(0,y^2,z^2,\lambda)=0,~x=0,~y\neq0,~z\neq0$ with eigenvalues:
$p,~vq_v+wr_w\pm\sqrt{v^2q_v^2-2vq_vwr_w+w^2r_w^2+4q_wy^2z^2r_v}.$
\item [(h)] $xyz$--mode solution: $p(x^2,y^2,z^2,\lambda)=q(x^2,y^2,z^2,\lambda)=r(x^2,y^2,z^2,\lambda)=0,~x\neq0,~y\neq0,~z\neq0.$
A short calculation with Matlab, for example, allows finding the explicit form of the eigenvalues $\mu_1,\mu_2$ and $\mu_3$, which are too large to be exposed here. In all these cases $(a)-(h)$ the stability of the solutions is given by the sign of the linear part, as indicated above.
\end{itemize}

\section{The Cayley graph of the $\mathbb{Z}_2\times\mathbb{Z}_2\times\mathbb{Z}_2$ group}\label{sec4}
In this section we build an oscillatory network with the symmetry of the $\mathbb{Z}_2\times\mathbb{Z}_2\times\mathbb{Z}_2$ group, and describe the elements of this group, as the relationships between them. We first to represent the group by a Cayley diagram. As shown in \cite{murza}, a Cayley diagram is a set of nodes and arrows/edges, connected in such a way to a represent a group. The nodes represent the group elements while the arrows are used to describe how the generators act on the group elements. For more details on how to construct a $\Gamma$--equivariant ODE via the Cayley graph of the $\Gamma$ group, see \cite{ADSW}.

\begin{figure}[ht]
\centering
\begin{center}
\includegraphics[scale=0.35]{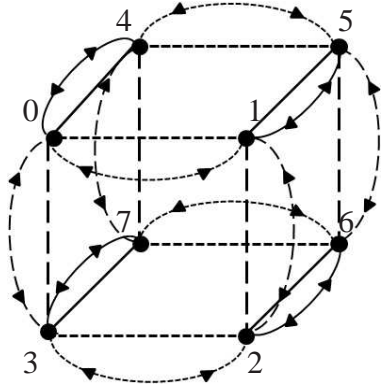}
\caption{A Cayley graph of the $\mathbb{Z}_2\times\mathbb{Z}_2\times\mathbb{Z}_2$ group. We use solid arrows to represent the left-multiplication with $a,$ dashed arrows left multiplication with $b,$ while dotted lines represent left multiplication with $c,$ the three generators of this group.}\label{second_figure}
\begin{picture}(0,0)\vspace{0.5cm}
\put(-64,149){\large{$0$}}
\put(20,149){\large{$1$}}
\put(20,54){\large{$2$}}
\put(-64,54){\large{$3$}}
\put(-29,183){\large{$4$}}
\put(54,183){\large{$5$}}
\put(54,107){\large{$6$}}
\put(-29,107){\large{$7$}}
\end{picture}\vspace{-0.3cm}
\end{center}
\end{figure}

The Cayley graph for $\mathbb{Z}_2\times\mathbb{Z}_2\times\mathbb{Z}_2$ is shown in Figure \eqref{second_figure}. By following the works of Stork et al. \cite{Stork} and Ashwin et al. \cite{ADSW} we build an ODE system which has the symmetry of  $\mathbb{Z}_2\times\mathbb{Z}_2\times\mathbb{Z}_2$ group. The action of the group $\mathbb{Z}_2\times\mathbb{Z}_2\times\mathbb{Z}_2$ on the nodes can be written by taking into account the relationship between their generators:
\begin{equation}\label{elements}
a=(0~3)(1~2)(4~7)(5~6),~~b=(0~4)(1~5)(3~7)(2~6),~~c=(3~2)(7~6)(0~1)(4~5),
\end{equation}
with the relationship between them
\begin{equation}\label{commutators}
\begin{array}{l}
\hspace{1cm}a^2=b^2=c^2=\mathbbm{1},~~~ab=ba,~~~ac=ca\\
bc=cb,~~~ab\cdot ac=bc,~~~ab\cdot bc=ac,~~~ac\cdot bc=ab.
\end{array}
\end{equation}
Nodes related by $a$ have assigned coupling $g$ nodes related by $b$ have assigned coupling $h$; then, from the permutations in \eqref{elements}, we construct the system \eqref{array 4 eq} with the symmetry of the $\mathbb{Z}_2\times\mathbb{Z}_2\times\mathbb{Z}_2$ group.
\begin{equation}\label{array 4 eq}
\begin{array}{l}
\dot{x}_0=f(x_0)+g(x_{0},x_4)+h(x_1,x_5)\\
\dot{x}_1=f(x_1)+g(x_3,x_7)+h(x_2,x_{6})\\
\dot{x}_2=f(x_2)+g(x_0,x_3)+h(x_{1},x_2)\\
\dot{x}_3=f(x_3)+g(x_4,x_7)+h(x_5,x_6)\\
\dot{x}_4=f(x_4)+g(x_3,x_2)+h(x_7,x_6)\\
\dot{x}_5=f(x_5)+g(x_0,x_{1})+h(x_4,x_5)\\
\dot{x}_6=f(x_6)+g(x_{0},x_{5})+h(x_1,x_{4})\\
\dot{x}_7=f(x_7)+g(x_{2},x_{7})+h(x_{3},x_6)\\
\end{array}
\end{equation}
where $f:\mathbf{R}\rightarrow\mathbf{R}$ and $g,~h:\mathbf{R}^2\rightarrow\mathbf{R}$.

\section{Weak Coupling}\label{sec5}

As shown in \cite{ADSW} majority of observable physical systems ar the result of the interaction between nearly identical subsystems, and these can be described by the perturbation of independent systems. It has been shown in Ashwin and Swift \cite{Ashwin_Swift}, that even oscillatory systems composed by strongly coupled subsystems, can be described within the "weak coupling limit". Based on these observations, we are allowed to assume that our system behave as a weakly coupled subsystems. Another assumption that we make is that our system is formed by oscillators that are dissipative subsystems with attracting and unique periodic orbits. We will devote especial interest to the dynamics of the phases of the weakly coupled oscillators. This situation becomes evident if taking into account the fact that in absence of coupling there is an attracting $n$--torus with a particular angle corresponding to every oscillator. A problem may arise if one focuses only on the phase differences; in this case we can switch to suitable coordinates that convert the dynamics into a linear flow and in addition this has no effect on the phase differences. This approach is related to the Hopf bifurcation, but instead of examining small amplitude oscillations near a Hopf bifurcation point, we make a weak coupling approximation. In the theory developed in \cite{Ashwin_Swift} dynamics takes place on a torus that is normally hyperbolic, so with small coupling, the torus persists and there is a slow evolution of the phase differences. Moreover, it has been pointed out that while the Hopf bifurcation theory gives local information, in the weak coupling case the results hold globally on the n-torus. In this sense, our approach based on weak coupling differs essentially from that carried out in \cite{Melbourne}. We do not use the Birkhoff normal form truncated at the cubic order as in \cite{Melbourne}; in fact our approach built on the most general $\mathbb{Z}_2\times\mathbb{Z}_2\times\mathbb{Z}_2$--equivariant system via Cayley graph, will mainly use group theoretical ideas to prove the existence of heteroclinic cycles in such a system.

Based on the facts explained above, in the weak coupling case equation \eqref{array 4 eq} reads
\begin{equation}\label{generic weak coupling equation}
\dot{x}_i=f(x_i)+\epsilon g_i(x_0,\ldots,x_{7})
\end{equation}
for $i=0,\ldots,7.$ System \eqref{generic weak coupling equation} commutes with $\mathbb{Z}_2\times\mathbb{Z}_2\times\mathbb{Z}_2$ on $\mathbb{T}^{8},$ both $f$ and $g_i$ being of the class $C^{\infty}.$ We denote by $\epsilon$ the coupling, and it has low values. As in \cite{ADSW}, \cite{Stork} or \cite{Ashwin_Swift} we assume the existence of a hyperbolic limit cycle for $\dot{x}$; in addition, it is assumed to be stable. This is so because of the following. Under no coupling, there is an attracting $8$--torus $\mathbb{T}^8$, with one angle/oscillator. Next we chose coordinates in such a way that the dynamics is a linear flow in the $(1,1,1,1,1,1,1,1)$ direction; this yields no change in the phase differences. Since the torus is hyperbolic, at small $\epsilon$, it persists.\\

Weakly coupled oscillators afford another reason for not having in mind only irreducible representations of $\mathbb{Z}_2\times\mathbb{Z}_2\times\mathbb{Z}_2,$ see \cite{Ashwin_Swift}. In our case of a network with $\mathbb{Z}_2\times\mathbb{Z}_2\times\mathbb{Z}_2$ symmetry, we deal with $8$ limit cycles that are stable and hyperbolic for $\epsilon=0.$ This leads to the conclusion that in the asymptotic limit, the dynamics factors into the dynamical behavior of eight limit cycles.
As explained above, we can assume that every one of these limit cycles are hyperbolic for small $\epsilon$ and this justifies expressing the dynamics of the system as an ODE in terms of its phases, i.e. an ODE on $\mathbb{T}^{8}$  which is $\mathbb{Z}_2\times\mathbb{Z}_2\times\mathbb{Z}_2$--equivariant.
\small
\begin{table}
\centering
\begin{center}
\caption{Isotropy subgroups and fixed point subspaces for the $\mathbb{Z}_2\times\mathbb{Z}_2\times\mathbb{Z}_2\times\mathbb{S}^1$ action on $\mathbb{T}^{8}$.}\label{table grande}
\end{center}
\begin{tabular}{ccccc}
\toprule
$\Sigma$ & $\mathrm{Fix}(\Sigma)$ & Generators & $\mathrm{dim~Fix}(\Sigma)$\\
\midrule
\centering
$\mathbb{Z}_2\times\mathbb{Z}_2\times\mathbb{Z}_2$ & $(0,0,0,0,0,0,0,0)$ & $(a,0),(b,0),(c,0)$ &$0$\\
$(\mathbb{Z}_2\times\mathbb{Z}_2\times\mathbb{Z}_2)^{a}$ & $(0,0,0,0,\pi,\pi,\pi,\pi)$ & $(a,\pi),(\mathbbm{1},\pi),(\mathbbm{1},\pi)$ &$0$\\
$(\mathbb{Z}_2\times\mathbb{Z}_2\times\mathbb{Z}_2)^{b}$ & $(0,\pi,0,\pi,0,\pi,0,\pi)$ & $(\mathbbm{1},\pi),(b,\pi),(\mathbbm{1},\pi)$ &$0$\\
$(\mathbb{Z}_2\times\mathbb{Z}_2\times\mathbb{Z}_2)^{c}$ & $(0,0,\pi,\pi,0,0,\pi,\pi)$ & $(\mathbbm{1},\pi),(\mathbbm{1},\pi),(c,\pi)$ &$0$\\
$(\mathbb{Z}_2\times\mathbb{Z}_2)^a$ & $(\pi,\pi,\phi,\phi,\mathbbm{1},\mathbbm{1},\mathbbm{1},\mathbbm{1})$ & $(a,\pi),(\mathbbm{1})$ &$1$\\
$(\mathbb{Z}_2\times\mathbb{Z}_2)^b$ & $(\pi,\phi,\pi,\phi,\mathbbm{1},\mathbbm{1},\mathbbm{1},\mathbbm{1})$ & $(b,\pi),(\mathbbm{1})$ &$1$\\
$(\mathbb{Z}_2\times\mathbb{Z}_2)^{c}$ & $(\phi,\phi,\pi,\pi,\mathbbm{1},\mathbbm{1},\mathbbm{1},\mathbbm{1})$ & $(ab,\pi),(\mathbbm{1})$ &$1$\\
$\mathbb{Z}_2$&$(0,\phi_1,0,\phi_1,\phi_2,\phi_3,\phi_2,\phi_3)$& $(a,0)$ &$3$\\
$\tilde{\mathbb{Z}}_2$&$(0,\phi_1,\pi,\phi_1+\pi,\phi_2,\phi_3,\phi_2+\pi,\phi_3+\pi)$& $(a,\pi)$ &$3$\\
\bottomrule
\end{tabular}
\end{table}
\normalsize

\begin{priteo}
Consider the action of $\mathbb{Z}_2\times\mathbb{Z}_2\times\mathbb{Z}_2\times\mathbb{S}^1$ on $\mathbb{T}^8.$ The isotropy subgroups, their generators and the corresponding dimension of their fixed-point subspaces are those listed in Table \eqref{table grande}.
\end{priteo}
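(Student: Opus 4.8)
The plan is to verify directly, for each row of Table~\eqref{table grande}, that the listed subgroup $\Sigma\subset\mathbb{Z}_2\times\mathbb{Z}_2\times\mathbb{Z}_2\times\mathbb{S}^1$ fixes the listed subspace pointwise, that this subspace is exactly the fixed-point set (no larger), and that the collection exhausts all isotropy types. The starting point is to make the action on $\mathbb{T}^8$ explicit. The spatial part $\mathbb{Z}_2\times\mathbb{Z}_2\times\mathbb{Z}_2$ acts by permuting the eight phase coordinates $(\theta_0,\dots,\theta_7)$ according to the permutation representation \eqref{elements}, i.e. $a,b,c$ act as the products of transpositions given there; the extra $\mathbb{S}^1$ factor, obtained from averaging, acts by the diagonal translation $\psi\cdot(\theta_0,\dots,\theta_7)=(\theta_0+\psi,\dots,\theta_7+\psi)$. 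Thus a general group element is a pair $(\sigma,\psi)$ with $\sigma\in\mathbb{Z}_2^3$ and $\psi\in\mathbb{S}^1$, acting by $(\sigma,\psi)\cdot\theta = \sigma\cdot\theta + \psi\mathbf{1}$.

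First I would compute, for each candidate generator $(\sigma,\psi)$ in the third column, the fixed-point condition $\sigma\cdot\theta+\psi\mathbf{1}=\theta$, which is a linear system of congruences mod $2\pi$ on the $\theta_i$. For instance, for $(a,\pi)$ one needs $\theta_3+\pi=\theta_0$, $\theta_2+\pi=\theta_1$, $\theta_7+\pi=\theta_4$, $\theta_6+\pi=\theta_5$ (reading off the cycles of $a$), which after using the residual diagonal $\mathbb{S}^1$ to normalize $\theta_0$ yields exactly the three-parameter family $(0,\phi_1,\pi,\phi_1+\pi,\phi_2,\phi_3,\phi_2+\pi,\phi_3+\pi)$ listed for $\tilde{\mathbb{Z}}_2$; the case $(a,0)$ gives the family for $\mathbb{Z}_2$ in the same way. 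For the rank-zero rows one checks that the three listed generators together force all coordinates to be determined (up to the overall diagonal, which is then quotiented), giving $\dim\mathrm{Fix}=0$; for the rank-one rows $(\mathbb{Z}_2\times\mathbb{Z}_2)^{a,b,c}$ one generator of the form $(\text{involution},\pi)$ plus the identity constraint leaves a single free phase $\phi$. Each such computation is a short linear-algebra exercise over $\mathbb{Z}$ and $\mathbb{R}$.

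Next I would establish that these subgroups are genuinely the isotropy subgroups and that the list is complete. Completeness follows from a counting/lattice argument: the isotropy subgroups of a finite-times-$\mathbb{S}^1$ action are organized by which subgroup of $\mathbb{Z}_2^3$ appears as the spatial part together with a homomorphism (the "twist") into $\mathbb{S}^1$ recording the $\pi$-shifts; enumerating subgroups of $\mathbb{Z}_2^3$ (there is the whole group, three copies of $\mathbb{Z}_2\times\mathbb{Z}_2$, three copies of $\mathbb{Z}_2$, and the trivial one) and, for each, the admissible twists compatible with the permutation action, produces precisely the nine rows — one must check that inadmissible or conjugate twists do not add new types. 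Then maximality of each $\mathrm{Fix}(\Sigma)$ is confirmed by checking that no element outside $\Sigma$ fixes a generic point of the listed subspace, which again reduces to inspecting the cycle structure of the remaining group elements against the constraints.

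The main obstacle I expect is bookkeeping rather than conceptual: correctly translating the abstract relations \eqref{commutators} into the concrete permutation action and then keeping the diagonal $\mathbb{S}^1$ normalization consistent across all rows, so that the "$\phi$", "$\phi_i$" parametrizations in the table match a single coherent choice of slice. In particular the rank-one rows use generators written as $(a,\pi)$, $(b,\pi)$, $(ab,\pi)$ — one must verify that the third one really is $ab$ and not, say, $c$, by checking which product of the spatial involutions has the right cycle type to leave a one-dimensional fixed space of the stated form. Once the dictionary between \eqref{elements} and the coordinate action is pinned down, every verification is mechanical, so I would present the $(a,0)$/$(a,\pi)$ computations in full as representative cases and indicate that the remaining rows follow by the identical method together with the subgroup enumeration.
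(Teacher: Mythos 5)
Your strategy is correct, and it is in fact more systematic than what the paper actually does. The paper's proof computes a single representative example: it writes the action through an $8\times 8$ block matrix with $2\times 2$ rotation blocks through angles $\phi_1,\phi_2,\phi_3$, observes that a point of $\mathbb{T}^8$ can only be fixed when these angles lie in $\{0,\pi\}$, and then leaves every remaining row of Table \ref{table grande} ``as an exercise for the reader.'' You instead work directly with the permutation action of $a,b,c$ from \eqref{elements} composed with the diagonal $\mathbb{S}^1$ translation, solve the fixed-point congruences $\sigma\cdot\theta+\psi\mathbf{1}=\theta$ row by row, and --- crucially --- add two ingredients the paper omits entirely: a completeness argument (enumerating the subgroups of $\mathbb{Z}_2^3$ together with the admissible twist homomorphisms into $\mathbb{S}^1$, which is exactly the structured version of the paper's ``only $0$ and $\pi$'' observation) and a maximality check that no element outside $\Sigma$ fixes a generic point of $\mathrm{Fix}(\Sigma)$. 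What the paper's route buys is brevity; what yours buys is an actual proof that the table is exhaustive, which the example-plus-exercise format does not deliver. One caveat you should make explicit when executing the computation: with the labeling \eqref{elements}, the generator $(a,\pi)$ gives the constraints $\theta_3=\theta_0+\pi,\ \theta_2=\theta_1+\pi,\ \theta_7=\theta_4+\pi,\ \theta_6=\theta_5+\pi$, so after normalizing $\theta_0=0$ the fixed set is $(0,\phi_1,\phi_1+\pi,\pi,\phi_2,\phi_3,\phi_3+\pi,\phi_2+\pi)$, which agrees with the table's entry for $\tilde{\mathbb{Z}}_2$ only up to a relabeling of the nodes (and similarly for the $(a,0)$ row, while some zero-dimensional rows list generators such as $(\mathbbm{1},\pi)$ that cannot literally fix any point); this mismatch originates in the paper's own bookkeeping rather than in your method, but your claim that the computation yields ``exactly'' the tabulated families needs that dictionary to be pinned down, as you yourself anticipate. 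Also note that the tabulated dimensions implicitly quotient out the neutral diagonal direction, so your normalization of $\theta_0$ must be stated as a convention, not derived from the isotropy condition alone.
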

\begin{proof}
We calculate one example, for the zero and one-dimensional fixed-point subspaces, the other cases being left as an exercise for the reader.
$(a)$ Consider the action of $\mathbb{Z}_2\times\mathbb{Z}_2\times\mathbb{Z}_2$ on $\mathbb{T}^8.$ We have
\begin{equation*}\label{teorema tabla example}
\begin{array}{l}
\begin{bmatrix}
\cos\phi_1&-\sin\phi_1&0&0&0&0&0&0\\
\sin\phi_1&\cos\phi_1&0&0&0&0&0&0\\
0&0&\cos\phi_2&-\sin\phi_2&0&0&0&0\\
0&0&\sin\phi_2&\cos\phi_2&0&0&0&0\\
0&0&0&0&\cos\phi_3&-\sin\phi_3&0&0\\
0&0&0&0&\sin\phi_3&\cos\phi_3&0&0\\
0&0&0&0&0&0&\mathbbm{1}&0\\
0&0&0&0&0&0&0&\mathbbm{1}
\end{bmatrix}
\begin{bmatrix}
a\\b\\c\\d\\e\\f\\g\\h
\end{bmatrix}=
\begin{bmatrix}
a\cos\phi_1-b\sin\phi_1\\
b\cos\phi_1+a\sin\phi_1\\
c\cos\phi_2-d\sin\phi_2\\
d\cos\phi_2+c\sin\phi_2\\
e\cos\phi_1-f\sin\phi_1\\
f\cos\phi_1+e\sin\phi_1\\
g\\
h
\end{bmatrix}=
\begin{bmatrix}
a\cos\phi\\
b\cos\phi\\
c\cos\phi\\
d\cos\phi\\
e\cos\phi\\
f\cos\phi\\
g\\
h
\end{bmatrix}=
\begin{bmatrix}
\pm a\\
\pm b\\
\pm c\\
\pm d\\
\pm e\\
\pm f\\
g\\
h
\end{bmatrix}.
\end{array}
\end{equation*}
because $\phi_1,\phi_2,\phi_3=\{0,\pi\}.$ There are just two possible values, i.e. $0$ and $\pi$ for any arbitrary point on $\mathbb{T}^8$ that allow to be fixed by the group. The three choices for the first three lines are deduced from the action of the elements of $\mathbb{Z}_2\times\mathbb{Z}_2\times\mathbb{Z}_2.$\\
\end{proof}

Ashwin and Swift showed in \cite{Ashwin_Swift} that for small $\epsilon$ one can average the equations forming the system. This can be interpreted as involving a phase--shift symmetry in the dynamics; it acts on $\mathbb{T}^8$ by translating the phases along the diagonal;
$$R_{\theta}(\phi_0,\ldots,\phi_{7}):=(\phi_0+\theta,\ldots,\phi_{7}+\theta),$$
for $\theta\in\mathbb{S}^1.$

In Table \eqref{table grande} we classify the isotropy subgroups and fixed point subspaces for the $\mathbb{Z}_2\times\mathbb{Z}_2\times\mathbb{Z}_2\times\mathbb{S}^1$ action on $\mathbb{T}^{8}.$\\
Since now on, our interest focuses in the three-dimensional space $\mathbb{Z}_2;$ it contains several one- and zero-dimensional fixed-point subspaces. These are fixed by the elements in the normalizer of $\mathrm{Fix}(\mathbb{Z}_2).$

\subsection{Dynamics of the $\theta_1,\theta_2$ and $\theta_3$ angles in $\mathrm{Fix}(\mathbb{Z}_2)$}\label{onetorus_theor}
We can define coordinates in $\mathrm{Fix}(\mathbb{Z}_2) $ by taking a basis
\begin{equation}\label{basis}
\begin{array}{l}
e_1=-\displaystyle{\frac{1}{4}}(1,1,1,1,-1,-1,-1,-1),\\
\\
e_2=-\displaystyle{\frac{1}{4}}(1, -1, 1, -1, 1, -1, 1, -1),\\
\\
e_3=-\displaystyle{\frac{1}{4}}(1, 1, -1, -1, 1, 1, -1, -1),
\end{array}
\end{equation}
and consider the space spanned by $\{e_1,e_2,e_3\}$ parametrized by $\{\theta_1,\theta_2,\theta_3\}:$ $\sum_{n=1}^3\theta_ne_n.$
By using these coordinates, we construct the following family of three-dimensional differential systems which satisfies the symmetry of $\mathrm{Fix}(\mathbb{Z}_2)$.
\begin{equation}\label{systema ejemplo}
\left\{
\begin{array}{l}
\dot{\theta_1}=u\sin{\theta_1}\cos{\theta_2}+\epsilon\sin{2\theta_1}\cos{2\theta_2}\\
\\
\dot{\theta_2}=u\sin{\theta_2}\cos{\theta_3}+\epsilon\sin{2\theta_2}\cos{2\theta_3}\\
\\
\dot{\theta_2}=u\sin{\theta_3}\cos{\theta_1}+\epsilon\sin{2\theta_3}\cos{2\theta_1}+q(1-\cos(\theta_1-\theta_2))\sin2\theta_3,\\
\end{array}
\right.
\end{equation}
where $u,\epsilon$ and $q$ are real constants.

We will show that this vector field contains a heteroclinic cycle which may be asymptotically stable, essentially asymptotically stable or completely unstable, depending on the values of $u,\epsilon$ and $q.$ As in \cite{ADSW}, we can assume, without loss of genericity that the space $\mathrm{Fix}(\mathbb{Z}_2)$ is normally attracting for the dynamics; therefore, we assume that the dynamics within the fixed-point space determines the stability of the full system.
In the following we will show that the planes $\phi_1=0~(\mathrm{mod}~\pi),~\phi_2=0~(\mathrm{mod}~\pi)$ are invariant under the flow of \eqref{systema ejemplo}.

Let $\mathcal{X}$ be the vector field of system \eqref{systema ejemplo}.
\begin{defi}
We call a trigonometric invariant algebraic surface $h(\theta_1,\theta_2,\theta_3)=0,$ if it is invariant by the flow of \eqref{systema ejemplo}, i.e. there exists a function $K(\theta_1,\theta_2, \theta_3)$ such that
\begin{equation}\label{campo}
\mathcal{X}h=\frac{\partial h}{\partial\theta_1}\dot{\theta_1}+\frac{\partial h}{\partial\theta_2}\dot{\theta_2}
+\frac{\partial h}{\partial\theta_3}\dot{\theta_3}=Kh.
\end{equation}
\end{defi}

\begin{lema}
Functions $\sin\theta_1,~\sin\theta_2$ and $\sin\theta_3$ are trigonometric invariant algebraic surfaces for system \eqref{systema ejemplo}.
\end{lema}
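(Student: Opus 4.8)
The plan is to verify directly from the definition that each of $\sin\theta_1$, $\sin\theta_2$, $\sin\theta_3$ satisfies equation \eqref{campo}, that is, to exhibit for each one an explicit cofactor $K$ with $\mathcal{X}h = Kh$. Take $h=\sin\theta_1$ first. Then $\partial h/\partial\theta_1 = \cos\theta_1$ and $\partial h/\partial\theta_2 = \partial h/\partial\theta_3 = 0$, so $\mathcal{X}h = \cos\theta_1\,\dot\theta_1 = \cos\theta_1\bigl(u\sin\theta_1\cos\theta_2 + \epsilon\sin 2\theta_1\cos 2\theta_2\bigr)$. Using $\sin 2\theta_1 = 2\sin\theta_1\cos\theta_1$, this equals $\sin\theta_1\bigl(u\cos\theta_1\cos\theta_2 + 2\epsilon\cos^2\theta_1\cos 2\theta_2\bigr)$, so $K_1 = u\cos\theta_1\cos\theta_2 + 2\epsilon\cos^2\theta_1\cos 2\theta_2$ works. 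The point is simply that every term of $\dot\theta_1$ carries a factor $\sin\theta_1$ once one expands $\sin 2\theta_1$.

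Next take $h=\sin\theta_2$. Here only $\dot\theta_2$ enters, and $\dot\theta_2 = u\sin\theta_2\cos\theta_3 + \epsilon\sin 2\theta_2\cos 2\theta_3$; again $\sin 2\theta_2 = 2\sin\theta_2\cos\theta_2$ pulls out the factor $\sin\theta_2$, giving $\mathcal{X}(\sin\theta_2) = \cos\theta_2\,\dot\theta_2 = \sin\theta_2\bigl(u\cos^2\theta_2\cos\theta_3/\sin\theta_2 \cdot \sin\theta_2 \ldots\bigr)$ — more cleanly, $\cos\theta_2\,\dot\theta_2 = \sin\theta_2\bigl(u\cos\theta_2\cos\theta_3 + 2\epsilon\cos^2\theta_2\cos 2\theta_3\bigr)$, so $K_2 = u\cos\theta_2\cos\theta_3 + 2\epsilon\cos^2\theta_2\cos 2\theta_3$. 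The case $h=\sin\theta_3$ is the only one needing a moment's care because $\dot\theta_3$ has the extra term $q(1-\cos(\theta_1-\theta_2))\sin 2\theta_3$; but $\sin 2\theta_3 = 2\sin\theta_3\cos\theta_3$ also carries the factor $\sin\theta_3$, so the whole of $\dot\theta_3$, after expanding $\sin 2\theta_3$ in each term, is $\sin\theta_3$ times a smooth function, and multiplying by $\partial h/\partial\theta_3 = \cos\theta_3$ keeps the factor $\sin\theta_3$. Thus $K_3 = u\cos\theta_3\cos\theta_1 + 2\epsilon\cos^2\theta_3\cos 2\theta_1 + 2q(1-\cos(\theta_1-\theta_2))\cos^2\theta_3$.

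Assembling these three computations gives the lemma: in each case $\mathcal{X}h = K_i h$ with the explicit cofactor above, so $h=0$ is a trigonometric invariant algebraic surface in the sense of the definition. There is essentially no obstacle here — the entire content is the algebraic identity $\sin 2\theta = 2\sin\theta\cos\theta$ applied term by term, which forces every monomial of each $\dot\theta_i$ occurring in $\mathcal{X}(\sin\theta_i)$ to be divisible by $\sin\theta_i$. The only thing worth stating carefully is that for $\sin\theta_1$ and $\sin\theta_2$ the other two components of the vector field never contribute, since $\partial(\sin\theta_i)/\partial\theta_j = 0$ for $j\neq i$, so one really only inspects a single component in each case; for $\sin\theta_3$ one must additionally note that the coupling term $q(1-\cos(\theta_1-\theta_2))\sin 2\theta_3$ does not spoil divisibility because it too is proportional to $\sin 2\theta_3$. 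I would present the three cofactor identities in a single display and conclude.
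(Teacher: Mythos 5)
Your proof is correct and is essentially the same argument as the paper's: the paper simply rewrites the system with $\sin\theta_i$ factored out of each $\dot\theta_i$ (i.e.\ applies $\sin 2\theta_i=2\sin\theta_i\cos\theta_i$ term by term, exactly as you do) and reads off the cofactor $K_1=\cos\theta_1\left(u\cos\theta_2+2\epsilon\cos\theta_1\cos 2\theta_2\right)$, leaving the other two cases as ``similar.'' Your cofactors $K_1,K_2,K_3$ agree with the factored form used there, including the $q$--term in the third equation, so no further changes are needed.
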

\begin{proof}
We can write the system \eqref{systema ejemplo} in the form
\begin{equation}\label{systema ejemplo1}
\left\{
\begin{array}{l}
\dot{\theta_1}=\sin{\theta_1}\left(u\cos{\theta_2}+2\epsilon\cos{\theta_1}\cos{2\theta_2}\right)\\
\\
\dot{\theta_2}=\sin{\theta_2}\left(u\cos{\theta_3}+2\epsilon\cos{\theta_2}\cos{2\theta_3}\right)\\
\\
\dot{\theta_3}=\sin{\theta_3}\left(u\cos{\theta_1}+2\epsilon\cos{2\theta_1}\cos{\theta_3}+2q(1-\cos(\theta_1-\theta_2))\cos\theta_3\right)\\
\end{array}
\right.
\end{equation}
If we choose $h_1=\sin\theta_1,$ then $\mathcal{X}h_1=\cos{\theta_1}\sin{\theta_1}\left(u\cos{\theta_2}+2\epsilon\cos{\theta_1}\cos{2\theta_2}\right)$
so $K_1=\cos{\theta_1}\left(u\cos{\theta_2}+2\epsilon\cos{\theta_1}\cos{2\theta_2}\right).$ The remaining cases follow similarly.
\end{proof}

Since the planes $\theta_i=0(\mathrm{mod}~\pi)$ are invariant under the flow of \eqref{systema ejemplo}, it is clear that $(0,0,0),~(\pi,0,0),~(0,\pi,0)$, and $(0,0,\pi)$ are equilibria for \eqref{systema ejemplo}. Next we search the existence of heteroclinic cycles in system \eqref{systema ejemplo}. The first step consists in linearizing it about the equilibria (i.e. the zero-dimensional fixed points). The idea is proving that there are three-dimensional fixed-point spaces $\mathrm{Fix}(\mathbb{Z}_2)$ and $\mathrm{Fix}(\tilde{\mathbb{Z}}_2)$ which connect these fixed points; this would allow the existence of such a heteroclinic network between the equilibria.\\
Let's assume
\begin{equation}\label{param}
\begin{array}{l}
\displaystyle{|\epsilon|<\frac{u}{2}}~~ \mathrm{and}~~
\displaystyle{|\epsilon+2q|<\frac{u}{2}}.
\end{array}
\end{equation}

\begin{table}
\centering
\begin{center}
\caption{Eigenvalues of the flow of equation \eqref{systema ejemplo}, at the four non-conjugate zero-dimensional fixed points.}\label{table fixed points}
\end{center}
\begin{tabular}{c|c|c|c|c}
\toprule
$\mathrm{Fix}(\Sigma)$ & $(\theta_1,\theta_2,\theta_3)$ & $\lambda_1$ & $\lambda_2$& $\lambda_3$\\
\midrule
\centering
$\mathbb{Z}_2\times\mathbb{Z}_2\times\mathbb{Z}_2$& $(0,0,0)$ & $u+2\epsilon$ &$u+2\epsilon$&$u+2\epsilon$\\
\midrule
$(\mathbb{Z}_2\times\mathbb{Z}_2\times\mathbb{Z}_2)^a$ & $(\pi,0,0)$ & $-u+2\epsilon$ &$u+2\epsilon$&$-u+2\epsilon+4q$\\
$(\mathbb{Z}_2\times\mathbb{Z}_2\times\mathbb{Z}_2)^b$ & $(0,\pi,0)$ & $-u+2\epsilon$ &$-u+2\epsilon$&$u+2\epsilon+4q$\\
$(\mathbb{Z}_2\times\mathbb{Z}_2\times\mathbb{Z}_2)^c$ & $(0,0,\pi)$ & $u+2\epsilon$ &$-u+2\epsilon$&$-u+2\epsilon$\\
\bottomrule
\end{tabular}
\end{table}

We use the criteria of Krupa and Melbourne \cite{Krupa} to study the stability of the heteroclinic cycle. We have
\begin{priteo}
There exists the possibility of a heteroclinic cycle in the following way:
\begin{equation}\label{flechas}
\begin{array}{l}
\cdots\xrightarrow{(\mathbb{Z}_2\times\mathbb{Z}_2)^a}(\mathbb{Z}_2\times\mathbb{Z}_2\times\mathbb{Z}_2)^a\xrightarrow{(\mathbb{Z}_2\times\mathbb{Z}_2)^b}
(\mathbb{Z}_2\times\mathbb{Z}_2\times\mathbb{Z}_2)^b\xrightarrow{(\mathbb{Z}_2\times\mathbb{Z}_2)^c}
(\mathbb{Z}_2\times\mathbb{Z}_2\times\mathbb{Z}_2)^c
\xrightarrow{(\mathbb{Z}_2\times\mathbb{Z}_2)^a}\cdots
\end{array}
\end{equation}
The stability of the heteroclinic cycle is:
\begin{itemize}
\vspace{0.25cm}
\item [(a)] asymptotically stable if
\begin{equation}\label{eq1 teorema estabilidad}
\begin{array}{l}
\displaystyle{u>0~\mathrm{and}~q<\frac{3u}{4}-\frac{\epsilon}{2}},
\end{array}
\end{equation}
\item [(b)] unstable but essentially asymptotically stable if
\begin{equation}\label{eq1 teorema estabilidad repetition}
\begin{array}{l}
\displaystyle{u>0~\mathrm{and}~\frac{3u}{4}-\frac{\epsilon}{2}<q<\frac{u}{2}-\frac{(u+2\epsilon)^3}{(-u+2\epsilon)^2}}.
\end{array}
\end{equation}
\item [(c)] completely unstable if $u<0.$
\end{itemize}
\end{priteo}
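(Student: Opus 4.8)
The plan is to read the cycle \eqref{flechas} off the invariant‑edge geometry of system \eqref{systema ejemplo} and then to determine its stability from the eigenvalue data of Table~\ref{table fixed points} by means of the Krupa--Melbourne criteria \cite{Krupa}. For the existence part, the Lemma above — that each $\sin\theta_i$ is a trigonometric invariant surface — shows that every coordinate plane $\{\theta_i\equiv 0\ (\mathrm{mod}\ \pi)\}$ is flow‑invariant, and hence so is every coordinate edge (intersection of two such planes); in particular all of $\{0,\pi\}^3$ consists of equilibria, which modulo the residual symmetry of \eqref{systema ejemplo} reduce to the four nodes of Table~\ref{table fixed points}. At such a lattice point the Jacobian of \eqref{systema ejemplo} is diagonal, which both produces the listed eigenvalues and shows that, under \eqref{param} with $u>0$, each of $(\pi,0,0),(0,\pi,0),(0,0,\pi)$ and its conjugates is a hyperbolic saddle whose one‑dimensional unstable manifold lies along a coordinate edge. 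Restricting the factored system \eqref{systema ejemplo1} to that edge gives a scalar equation of the form $\dot\theta=\sin\theta\bigl(u+2(\epsilon\text{ or }\epsilon+2q)\cos\theta\bigr)$, whose bracket is strictly positive on $(0,\pi)$ precisely by \eqref{param}; the edge flow is thus strictly monotone, so the unstable manifold of each saddle runs to the next lattice point, which is a sink within that edge. Concatenating the three connections yields the heteroclinic cycle \eqref{flechas}, and it is robust because each connecting orbit is confined to a flow‑invariant edge.

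For the stability statement, I would classify the three eigenvalues at each node as expanding ($e_j>0$, along the outgoing edge), contracting ($-c_j<0$, along the incoming edge), and radial/transverse ($-r_j<0$, along the third edge); under \eqref{param} with $u>0$ every transverse eigenvalue is negative, while the moduli $e_j,c_j,r_j$ are among the quantities appearing in Table~\ref{table fixed points}. Since the cycle lies in three two‑dimensional invariant coordinate planes meeting pairwise along the connecting edges, the Krupa--Melbourne framework applies: composing the local passage maps near the three saddles with the global maps along the connections produces a transition map whose leading contraction and expansion rates are explicit products of the ratios $c_j/e_j$ and $r_j/e_j$. Demanding that all these rates be smaller than one, and substituting the eigenvalues, collapses to the asymptotic‑stability condition \eqref{eq1 teorema estabilidad}; allowing instead the weaker requirement of attraction of a set of full density — essential asymptotic stability — enlarges the parameter window to \eqref{eq1 teorema estabilidad repetition}, the new upper endpoint being the value of $q$ at which the radially governed rate at the $q$‑dependent node equals one. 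Finally, for $u<0$ the eigenvalue pattern at the nodes changes (the formerly radial eigenvalues acquire positive real part), so the transition map is expanding in every direction and the cycle is completely unstable.

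The main obstacle is the stability step: one must check that \eqref{flechas} is a \emph{simple} heteroclinic cycle to which the theorems of \cite{Krupa} apply — that the connecting orbits are non‑degenerate, that no resonances arise among the eigenvalues entering the transition map, and that the transverse directions at each node coincide with the radial ones — and then carry through the lengthy but routine computation turning the abstract stability indices into the inequalities \eqref{eq1 teorema estabilidad}--\eqref{eq1 teorema estabilidad repetition}. The existence part, by comparison, is essentially forced by the invariant‑edge structure together with the monotonicity furnished by \eqref{param}.
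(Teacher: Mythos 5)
Your overall route coincides with the paper's: existence is read off the invariant structure of \eqref{systema ejemplo} (the invariant surfaces $\sin\theta_i=0$, the lattice equilibria, and the eigenvalue signs of Table~\ref{table fixed points} under \eqref{param}), and stability is referred to the Krupa--Melbourne machinery \cite{Krupa}. Your existence discussion is in fact more explicit than the paper's, which essentially asserts the connections; your monotonicity argument for the edge flow of \eqref{systema ejemplo1} under \eqref{param} is a reasonable way to substantiate the arrows in \eqref{flechas}, provided you keep track (as you do, via conjugacy) that e.g.\ the unstable manifold of $(\pi,0,0)$ ends at $(\pi,\pi,0)$, a conjugate of the $b$-type point.

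The genuine gap is in the stability step, which is where the paper's proof actually lives. You invoke ``demanding that all these rates be smaller than one,'' but that is not the criterion of \cite{Krupa}: the relevant quantity is the index $\rho=\prod_{i=1}^{3}\rho_i$ with $\rho_i=\min\{c_i/e_i,\,1-t_i/e_i\}$ as in \eqref{stability krupa1}, and asymptotic (respectively essential asymptotic) stability is decided by $\rho$ via Theorem~2.4 of \cite{Krupa}, not by each ratio separately. In particular, the whole point of case (b) is that one transverse eigenvalue, $-u+2\epsilon+4q$, becomes positive while the product $\rho$ can still exceed $1$ --- this is exactly what the $\min\{c_i/e_i,1-t_i/e_i\}$ mechanism captures, and a ``all rates less than one'' requirement cannot even distinguish (a) from (b). Consequently the quantitative content of the theorem is never derived in your proposal: the switching value $q=\tfrac{3u}{4}-\tfrac{\epsilon}{2}$ arises as the parameter at which the minimum defining $\rho_1$ changes branch (paper's \eqref{rho values11}), and the upper endpoint $\tfrac{u}{2}-\tfrac{(u+2\epsilon)^3}{(-u+2\epsilon)^2}$ in \eqref{eq1 teorema estabilidad repetition} comes from solving $\rho=1$ on the second branch of \eqref{rho values13}. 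Asserting that the computation ``collapses'' to \eqref{eq1 teorema estabilidad}--\eqref{eq1 teorema estabilidad repetition} leaves precisely these inequalities unproved; to close the argument you must classify, at each node of Table~\ref{table fixed points}, which eigenvalue is contracting, expanding, and transverse relative to the cycle \eqref{flechas}, compute $\rho_1,\rho_2,\rho_3$ explicitly, form $\rho$, and only then apply Theorem~2.4 of \cite{Krupa}, including the case $u<0$ for complete instability.
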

\begin{proof}

The stability of the cycle is expressed by
\begin{equation}\label{stability krupa1}
\rho=\prod_{i=1}^3\rho_i,~~\mathrm{where}~~\rho_i=\mathrm{min}\{c_i/e_i,1-t_i/e_i\}.
\end{equation}
Here $c_i$ stands for the contracting eigenvalues, $e_i$ the expanding eigenvalues while $t_i$ the transversal ones.
For the heteroclinic cycle we have
\begin{equation}\label{rho values11}
\rho_1=
\left\{
\begin{array}{l}
\displaystyle{\frac{2u-4q}{u+2\epsilon}}~\mathrm{if}~q<\frac{3u}{4}-\frac{\epsilon}{2}\\
\\
\displaystyle{\frac{-u+2\epsilon}{u+2\epsilon}}~\mathrm{if}~q>\frac{3u}{4}-\frac{\epsilon}{2}
\end{array}
\right.\hspace{0.25cm}
\displaystyle{\rho_2=\frac{-u+2\epsilon}{u+2\epsilon+4q}},~\rho_3=\displaystyle{\frac{-u+2\epsilon}{u+2\epsilon}},
\end{equation}
so from equations \eqref{rho values11} we obtain
\begin{equation}\label{rho values13}
\rho=
\left\{
\begin{array}{l}
\displaystyle{\frac{(-u+2\epsilon)^2(2u-4q)}{(u+2\epsilon)^2(u+2\epsilon+4q)}}~\mathrm{if}~u>0~\mathrm{and}~q<\frac{3u}{4}-\frac{\epsilon}{2},\\
\\
\displaystyle{\frac{(-u+2\epsilon)^3}{(u+2\epsilon)^2(u+2\epsilon+4q)}}~\mathrm{if}~u>0~\mathrm{and}~q>\frac{3u}{4}-\frac{\epsilon}{2}.
\end{array}
\right.
\end{equation}
Then the proof follows by applying Theorem $2.4$ in \cite{Krupa}.
\end{proof}

For any value $u>0$ we get $\displaystyle{\frac{3u}{4}-\frac{\epsilon}{2}<q<\frac{u}{2}-\frac{(u+2\epsilon)^3}{(-u+2\epsilon)^2}};$ hence, there are values of $q$ that allow existence of essentially asymptotic stable heteroclinic connections. In addition to the information in \cite{Melbourne}, our results indicate that there exists an attracting heteroclinic cycle even when the linearly analized stability of $\mathrm{Fix}(\mathbb{Z}_2\times\mathbb{Z}_2)^a$ yields an expanding transverse eigenvalue.

\paragraph{\bf Acknowledgements}
Adrian Murza was supported by a grant of Romanian National Authority for Scientific Research and Innovation, CNCS-UEFISCDI, project number PN-II-RU-TE-2014-4-0657.

\end{document}